 \newcommand{\bt}{\boxtimes}
 \newcommand{\Alt}{\mathcal{A}}
 \newcommand{\sym}{\mathfrak{S}}
 \newcommand{\IBr}{\operatorname{IBr}}
 \newcommand{\Ind}{\operatorname{Ind}}
 \newcommand{\Res}{\operatorname{Res}}
 \newcommand{\C}{\mathbb{C}}
 \newcommand{\Z}{\mathbb{Z}}
 \newcommand{\Irr}{\operatorname{Irr}}
 \newcommand{\Br}{\operatorname{IBr}}
\newcommand{\cal}[1]{\mathcal{#1}}
 \newcommand{\dis}{\displaystyle}
 \newcommand{\la}{\lambda}
  \newcommand{\sa}{\sigma}
 \newcommand{\ga}{\gamma}
\newtheorem{theorem}{Theorem}[section] 
\newtheorem{lemma}[theorem]{Lemma}     
\theoremstyle{definition}
\newtheorem{remark}[theorem]{Remark}
\title[]
{Restriction of characters to subgroups of wreath products and basic sets for the symmetric group}
\author{Jean-Baptiste Gramain}
\address{Institute of Mathematics, 
University of Aberdeen, King's College \\
Fraser Noble Building, Aberdeen AB24 3UE, UK
}
\email{jbgramain@abdn.ac.uk}
\author{Adriana Marciuk}
\address{National University of Singapore\\ Department of Mathematics\\ Faculty of Science\\ Level 4, Block S17\\ 10 Lower Kent Ridge Road\\ Singapore 119076\\ Singapore.}
\email{matmae@nus.edu.sg}
\subjclass[2010]{Primary 20C30,\, 20C15; Secondary 20C20}
\begin{document}
\begin{abstract}
In this paper, we give the decomposition into irreducible characters of the restriction to the wreath product $\Z_{p-1} \wr \sym_w$ of any irreducible character of $(\Z_p \rtimes \Z_{p-1}) \wr \sym_w$, where $p$ is any odd prime, $w \geq 0$ is an integer, and $\Z_p$ and $\Z_{p-1}$ denote the cyclic groups of order $p$ and $p-1$ respectively. This answers the question of how to decompose the restrictions to $p$-regular elements of irreducible characters of the symmetric group $\sym_n$ in the $\Z$-basis corresponding to the $p$-basic set of $\sym_n$ described by Brunat and Gramain in \cite{BrGr}. The result is given in terms of the Littlewood-Richardson coefficients for the symmetric group.\end{abstract}
\maketitle

\section{Introduction}
\label{sec:intro}

Let $G$ be a finite group and $\Irr (G)$ be the set of irreducible complex characters of $G$. Let $p$ be a prime (dividing $ |G|$), and let ${\cal C}$ be the set of $p$-regular elements of $G$. For each $\chi \in \C \Irr(G)$, we define a class function $\chi^{\cal C}$ of $G$ by letting
$$\chi^{\cal C}(g)= \left\{ \begin{array}{ll} \chi(g) & \mbox{if} \; g \in {\cal C}, \\ 0 & \mbox{otherwise.} \end{array} \right. $$
One of the fundamental results of Brauer's Theory is the existence of a surjective homomorphism, called the decomposition homomorphism,
$$d \colon \left\{ \begin{array}{rcl} \Z \Irr (G) & \longrightarrow & \Z \IBr_p (G) \\ \chi & \longmapsto & \chi^{\cal C} \end{array} \right. ,$$
where $\IBr_p(G)$ is the set of irreducible ($p$-modular) Brauer characters of $G$. The matrix $D$ of $d$ in the bases $\Irr(G)$ and $\Br_p(G)$ is the {\emph{($p$-modular) decomposition matrix of $G$}}. Up to reordering the rows and columns, the matrix $D$ is diagonal by blocks, which gives partitions of $\Irr(G)$ and $\Br_p(G)$ into {\emph{$p$-blocks}}.

\smallskip

While finding the decomposition matrix of a group is a very difficult problem, {\emph{basic sets}} can sometimes help computing Brauer characters and/or the decomposition matrix $D$, or at least reduce the problem. We call {\emph{$p$-basic set for $G$}} any subset ${\cal B} \subset \Irr (G)$ such that the family $\cal B^{\cal C}=\{ \chi^{\cal C}, \, \chi \in \cal B \}$ is a $\Z$-basis for the $\Z$-module generated by $\Irr^{\cal C}(G) =\{ \chi^{\cal C}, \, \chi \in \Irr(G) \}$. In particular, $|\cal B|$ is the number of $p$-regular conjugacy classes of $G$. One can also define the notion of $p$-basic set for a $p$-block of $G$, and one shows easily that, if each $p$-block $b$ of $G$ has a $p$-basic set ${\cal B}_b$, then the union of the ${\cal B}_b$'s is a $p$-basic set for $G$.

If $\cal B$ is a $p$-basic set for $G$, and if we write $\chi^{\cal C}= \displaystyle \sum_{\psi \in \cal B} n_{\chi \psi} \cdot \psi^{\cal C}$ ($\chi \in \Irr(G), \, n_{\chi \psi} \in \Z$) and $N_{\cal B} = (( n_{\chi \psi} ))_{\chi \in \Irr(G), \, \psi \in \cal B}$, and $D_{\cal B}$ for the (square) sub-matrix of $D$ whose rows correspond to $\cal B$, then we have $D=N_{\cal B} D_{\cal B}$, so that computing the matrix $N_{\cal B}$ reduces the problem of finding $D$ to computing (the smaller matrix) $D_{\cal B}$.

In \cite{BrGr}, the authors describe, for any integer $n$ and odd prime $p$, a $p$-basic set ${\cal B}$ for the symmetric group $\sym_n$. The object of the present paper is, in this case, to describe completely the matrix $N_{\cal B}$. It should be noted that another $p$-basic set for $\sym_n$ was previously known (see \cite[Section 6.3]{James-Kerber}), but that ${\cal B}$ has further properties which allow it to restrict to a $p$-basic set for the alternating group $\Alt_n$.

\medskip
Throughout this paper, we let $n\geq 1$ be any integer, and $p$ be an odd prime. The irreducible complex characters of the symmetric group $\sym_n$ are canonically labelled by partitions of $n$, and we write $\Irr(\sym_n)=\{ \chi_{\la} \, | \, \la \vdash n \}$. For any $\la \vdash n$, we write $n=| \la |$, the {\emph{size}} of $\la$. The distribution of irreducible characters of $\sym_n$ into $p$-blocks is described by the Nakayama Conjecture (see \cite[6.1.21]{James-Kerber}). Each partition $\la$ of $n$ is completely and uniquely determined by its {\emph{$p$-core}} $\ga_p(\la)$ and its $p$-quotient $q_p(\la)$. The $p$-core $\ga_p(\lambda)$ is the partition, of some integer $s$, obtained by removing from $\la$ all the {\emph{hooks}} of length divisible by $p$, and the $p$-quotient $q_p(\la)$ is a $p$-tuple $(\lambda^1, \, \la^2, \, \ldots , \, \la^p)$ of partitions whose sizes add up to the integer $w$ (written $(\lambda^1, \, \la^2, \, \ldots , \, \la^p) \Vdash w$), called the {\emph{$p$-weight of $\la$}}, and such that $n = s+pw$. Then two characters $\chi_{\la}, \, \chi_{\mu} \in \Irr(\sym_n)$ belong to the same $p$-block of $\sym_n$ if and only if $\ga_p(\la)=\ga_p(\mu)$. In particular, if that is the case, then $\la$ and $\mu$ have the same $p$-weight, and characters in the $p$-block of $\sym_n$ corresponding to the $p$-core $\ga\vdash s$ are labelled by the $p$-tuples of partitions $(\nu^1, \, \nu^2, \, \ldots , \, \nu^p) \Vdash w$, where $w=(n-s)/p$, and the $p$-block is said to have $p$-weight $w$.

\smallskip
In \cite{BrGr}, the authors show that $\{\chi_{\la} \in \Irr(\sym_n) \, | \, q_p(\la)=( \lambda^1, \, \ldots , \, \la^p) \; \mbox{with} \; \la^{r}=\emptyset\}$ is a $p$-basic set for $\sym_n$, where $r=\frac{p+1}{2}$. To prove this, they construct, for each $w \geq 0$, a {\emph{generalized perfect isometry}} between the set of irreducible characters of a $p$-block $b$ of $p$-weight $w$ and the set of irreducible characters of the wreath product $(\Z_p \rtimes \Z_{p-1}) \wr \sym_w$, where $\Z_p$ and $\Z_{p-1}$ denote the cyclic groups of order $p$ and $p-1$ respectively. The irreducible characters of $(\Z_p \rtimes \Z_{p-1}) \wr \sym_w$ can be parametrized by the $p$-tuples $\nu=(\nu^1,  \, \ldots , \, \nu^p) \Vdash w$ of partitions of $w$ (see Section \ref{sec:IrrCharWreath}), in such a way that a character has the subgroup $\Z_p^w$ of $(\Z_p \rtimes \Z_{p-1})\wr \sym_w=\Z_p^w \rtimes (\Z_{p-1} \wr \sym_w)$ in its kernel if and only if it is labelled by $\nu=(\nu^1,  \, \ldots , \, \nu^p) \Vdash w$ such that $\nu^r=\emptyset$. We write $\Irr((\Z_p \rtimes \Z_{p-1}) \wr \sym_w)=\{\chi^{\nu} \, | \, \nu \Vdash w\}$. We also let ${\cal B}_b=\{ \chi_{\la} \in \Irr(b) \, | \, q_p(\la)=( \lambda^1, \, \ldots , \, \la^p) \; \mbox{with} \; \la^{r}=\emptyset\}$ be the $p$-basic set for $b$ constructed in \cite{BrGr}. The results of \cite{BrGr} show that there is an explicit bijection $\chi_{\la} \longmapsto \chi_{\tilde{\la}}$ from $\Irr(b)$ to itself, which restricts to the identity on ${\cal B}_b$, as well as explicitly determined signs $\{ \varepsilon (\la) \, , \, \chi_{\la} \in \Irr(b) \}$ such that, if we write $\chi_{\la}^{\cal C}= \displaystyle \sum_{\chi_{\mu} \in {\cal B}_b} n_{\la \mu}  \cdot \chi_{\mu}^{\cal C}$ ($\chi_{\la} \in \Irr(b), \, n_{\la \mu} \in \Z$), then the coefficients $n_{\la \mu}$ ($\chi_{\la} \in \Irr(b), \, \chi_{\mu} \in {\cal B}_b$) are given by
\begin{equation}
\label{eq:coef}n_{\la \mu}= \varepsilon(\la) \varepsilon(\mu) \langle \Res^{(\Z_p \rtimes \Z_{p-1})\wr \sym_w}_{\Z_{p-1} \wr \sym_w}(\chi^{q_p(\tilde{\la})}), \Res^{(\Z_p \rtimes \Z_{p-1})\wr \sym_w}_{\Z_{p-1} \wr \sym_w}(\chi^{q_p(\tilde{\mu})}) \rangle_{\Z_{p-1} \wr \sym_w}.\end{equation}
Note that, by construction, for any $\chi_{\mu} \in {\cal B}_b$, we have $q_p(\tilde{\mu})=(\nu^1,  \, \ldots , \, \nu^p)$ with $\nu^r=\emptyset$. Thus $\chi^{q_p(\tilde{\mu})}$ has $\Z_p^w$ in its kernel, and $\Res^{(\Z_p \rtimes \Z_{p-1})\wr \sym_w}_{\Z_{p-1} \wr \sym_w}(\chi^{q_p(\tilde{\mu})})$ is actually an irreducible character of ${\Z_{p-1} \wr \sym_w}$. Hence, in order to decompose any restriction to $p$-regular elements of an irreducible character of $\sym_n$ as a $\Z$-linear combination of the restrictions of characters in the basic set ${\cal B}$, it is sufficient to compute the decomposition into irreducible characters of $\Z_{p-1} \wr \sym_w$ of any irreducible character of $(\Z_p \rtimes \Z_{p-1}) \wr \sym_w$. This decomposition is given by our main result, Theorem \ref{thm:main}.

\medskip

The paper is organised as follows. In Section \ref{sec:WreathProducts}, we recall classical results about the conjugacy classes and irreducible complex characters of wreath products. These results are then applied to the groups $(\Z_p \rtimes \Z_{p-1}) \wr \sym_w$ and $\Z_{p-1} \wr \sym_w$ in Section \ref{sec:IrrCharWreath}, and the irreducible characters of these groups are parametrized in ways that are compatible (see Theorem \ref{thm:Compatible}). In Section \ref{sec:SpecialCase}, we describe the characters of $(\Z_p \rtimes \Z_{p-1}) \wr \sym_w$ induced by some specific characters of $\Z_{p-1} \wr \sym_w$. These particular cases form the basis for the computations of Section \ref{sec:Coef}, where we explicitly decompose into irreducibles the induction to $(\Z_p \rtimes \Z_{p-1}) \wr \sym_w$ of any irreducible character of $\Z_{p-1} \wr \sym_w$ (see Theorem \ref{thm:main}). This in turn provides a formula for any of the scalar products appearing in Equation (\ref{eq:coef}).

\section{Conjugacy classes and irreducible characters of wreath products}
\label{sec:WreathProducts}

Throughout this section, we let $N$ be a finite group and $w\geq 1$ be an integer, and consider the wreath product $N \wr \sym_w$. That is, $N \wr \sym_w$ is the semidirect product $N^w
\rtimes \sym_w$, where $\sym_w$ acts by permutation on the $w$ copies
of $N$. For a complete description of wreath products and their representations,
we refer to \cite[Chapter 4]{James-Kerber}.

Let $s$ be the number of conjugacy classes of $N$, and let $g_1, \, \ldots , \, g_s$ be
representatives for the conjugacy classes of $N$. Then the conjugacy classes of $N \wr \sym_w$ can be parametrized by the
$s$-tuples of partitions of $w$ as follows. The elements of
$N \wr \sym_w$ are of the form $(h; \, \sigma)=((h_1, \, \ldots, \, h_w ) ; \, \sigma)$, with $h_1, \, \ldots, \, h_w \in N$ and $ \sigma \in \sym_w$. For any such element, write $\sigma=\sigma_1 * \cdots * \sigma_{c(\sigma)}$, a product of disjoint cycles. Then, for any $1 \leq \nu \leq c(\sigma)$, we have $\sigma_{\nu}=(j_{\nu}, j_{\nu} \sigma_{\nu}, \ldots,
j_{\nu} \sigma_{\nu}^{k_{\nu}-1})$ (where $\sigma_{\nu}$ is a $k_{\nu}$-cycle), and we define the {\emph{$\nu$-th cycle product of
$(h; \, \sigma)$}} by
$$
g_{\nu}(h; \, \sigma)= h_{j_{\nu}} \cdot h_{j_{\nu} \sigma_{\nu}^{-1}}  \cdot h_{j_{\nu} \sigma_{\nu}^{-2}} \cdot \, \cdots
 \, \cdot h_{j_{\nu} \sigma_{\nu}^{-(k_{\nu}-1)}}.
$$
In particular, $g_{\nu}(h; \, \sigma) \in N$. We then we form $s$ partitions $(\pi_1, \,
\ldots, \, \pi_s)$ as follows: each $1 \leq \nu \leq c(\sigma)$ gives a cycle of length $k_{\nu}$ in $\pi_i$ if the cycle product $g_{\nu}(h ;
\, \sigma)$ is conjugate to $g_i$ in $N$. The resulting $s$-tuple
of partitions of $w$ describes the {\emph{cycle structure}} of $(h; \,
\sigma)$, and two elements of $N \wr \sym_w$ are conjugate if and only if they
have the same cycle structure.

\smallskip
The irreducible complex characters of $N \wr \sym_w$ can also be
parametrized by the $s$-tuples of partitions of $w$ as follows. Let $\Irr(N)=\{ \omega_1, \, \ldots , \,
\omega_s \}$. Take any $\alpha=(\alpha^1, \, \ldots , \, \alpha^s) \Vdash w$ and consider
the irreducible character $\prod_{i=1}^s \omega_i^{|\alpha^i|}$ of the
{\emph{base group}} $N^w$. It can be extended in a natural way to its
inertia subgroup $N \wr \sym_{|\alpha^1|} \times \cdots \times N \wr
\sym_{|\alpha^s|}$, giving the irreducible character $\prod_{i=1}^s
\widetilde{\omega_i^{|\alpha^i|}}$. For each $1 \leq i \leq s$, the irreducible character $\widetilde{\omega_i^{|\alpha^i|}}$ of $N \wr \sym_{|\alpha^i|}$ is given as follows: if $(f; \, \pi) \in N \wr \sym_{|\alpha^i|}$ has cycle products $g_{\nu}(f ; \, \pi)$ ($1 \leq \nu \leq c(\pi)$), then $\widetilde{\omega_i^{|\alpha^i|}}(f; \, \pi)=\prod_{\nu=1}^{c(\pi)} \omega_i(g_{\nu}(f ; \, \pi))$ (see \cite[Lemma 4.3.9]{James-Kerber}). Any
extension of $\prod_{i=1}^s \omega_i^{|\alpha^i|}$ to $N \wr \sym_{|\alpha^1|} \times \cdots \times N \wr
\sym_{|\alpha^s|}$ is of the form $\Omega^{\alpha}=\prod_{i=1}^s (\widetilde{\omega_i^{|\alpha^i|}}
\otimes \Upsilon_{\alpha_i})$, where, for each $1 \leq i \leq s$, $\Upsilon_{\alpha_i} \in \C \Irr(N \wr \sym_{|\alpha^i|})$ is defined by $\Upsilon_{\alpha_i} (f ; \, \pi) = \chi_{\alpha_i} (f ; \, \pi)$ for all $(f ; \, \pi) \in  N \wr \sym_{|\alpha^i|}$ (and $\chi_{\alpha_i} \in \Irr(\sym_{|\alpha^i|})$, see \cite[4.3.15]{James-Kerber}). Then $\aleph^{\alpha}:=
\Ind_{\scriptstyle \prod_{i=1}^s N \wr \sym_{|\alpha^i|} }^{N \wr
\sym_w} ( \Omega^{\alpha} ) \in \Irr(N \wr \sym_w)$. Different $\alpha \Vdash w$
give different irreducible characters of $N \wr \sym_w$, and any irreducible
character of $N \wr \sym_w$ can be obtained in this way (see \cite[Theorem 4.3.34]{James-Kerber}).

\section{Parametrizations of $\Irr((\Z_p \rtimes \Z_{p-1}) \wr \sym_w)$ and $\Irr( \Z_{p-1} \wr \sym_w)$}
\label{sec:IrrCharWreath}

From now on, and throughout the paper, we fix an odd prime $p$, and we let $r=\frac{p+1}{2}$. We write $I$ for the set $\{1, \, 2, \, \ldots , \, p\} \setminus \{r\}$. We let $H=\Z_{p-1}$ and $G=\Z_p \rtimes \Z_{p-1}$, and, for any integer $k \geq 1$, we let $H_k=H \wr \sym_k$ and $G_k = G \wr \sym_k$.

We start by describing the irreducible characters of $G$ and their restrictions to $H$. The irreducible complex characters of $G$ are described as follows. We have $\Irr(G)=\{ \psi_1, \, \ldots , \,
\psi_p \}$, with $\psi_i(1)=1$ for $i \in I$,
and $\psi_r(1)=p-1$. More precisely, writing $\eta_1=1_{\mathbb{Z}_p}$ for the
trivial character of $\mathbb{Z}_p$, we have
$$
\Ind_{\mathbb{Z}_p}^G(1_{\mathbb{Z}_p})= \displaystyle \sum_{i \in I} \psi_i \; \; \mbox{and} \; \;
\Res_{\mathbb{Z}_p}^G(\psi_i)= 1_{\mathbb{Z}_p} \; \; (i \in I),
$$
and
$$
\Res_{\mathbb{Z}_p}^G(\psi_r)= \eta_2 + \, \cdots \, + \eta_p \; \;
\mbox{and} \; \; \Ind_{\mathbb{Z}_p}^G(\eta_i)= \psi_r \; \; (2 \leq i
\leq p),
$$
where $\{ \eta_2 , \, \ldots , \, \eta_p \} = \Irr (\mathbb{Z}_p)
\setminus \{ 1_{\mathbb{Z}_p} \}$.

\smallskip
Now write $\Irr(H)=\{\theta_i \, | \, i \in I\}= \{ \theta_1=1_H, \, \theta_2, \, \ldots , \, \theta_{r-1} , \, \theta_{r+1} , \, \ldots , \, \theta_p\}$. For any $i \in I$, $\psi_i$ has $\Z_p \triangleleft G$ in its kernel. Thus, without loss of generality, we can choose the labelling such that $\psi_i = \theta_i \circ \varpi$, where $\varpi \colon G=\Z_p \rtimes \Z_{p-1} \longrightarrow \Z_{p-1}=H$ is the canonical surjection. In particular,
\begin{equation}
\label{eq:Restrictpsii}
\Res^G_H(\psi_i)=\theta_i \qquad \mbox{for all} \; i \in I.
\end{equation}
To describe $\Res^G_H(\psi_r)$, we start by noticing that $\psi_r(g)=0$ for all $g \in G \setminus \Z_p$. Indeed, by the first orthogonality relation, we have
$$\dis \sum_{g \in \Z_p} \psi_r(g) \overline{\psi_r(g)}=\sum_{2 \leq i,j \leq p} \sum_{g \in \Z_p} \eta_i(g) \overline{\eta_j(g)}=\sum_{2 \leq i \leq p} \sum_{g \in \Z_p} \eta_i(g) \overline{\eta_i(g)}=p(p-1).$$
Since also $ \dis \sum_{g \in G} \psi_r(g) \overline{\psi_r(g)}=|G|=p(p-1)$, this yields $$\dis \sum_{g \in G \setminus \Z_p} \psi_r(g) \overline{\psi_r(g)}=\sum_{g \in G \setminus \Z_p} |\psi_r(g)|^2=0,$$so that $\psi_r(g)=0$ for all $g \in G \setminus \Z_p$. Since $\psi_r(1)=p-1$, this gives 
\begin{equation}
\label{eq:respsir}
\Res^G_H(\psi_r)=(p-1) \delta_1,
\end{equation}
where $\delta_1$ is the indicator function of (the conjugacy class in $H$ of) 1. Now, by the second orthogonality relation, we have, for any $h \in H$
$$\dis \sum_{i \in I} \theta_i(h)=\dis \sum_{i \in I} \theta_i(h)\theta_i(1)=|C_H(1)|\delta_1(h)=(p-1)\delta_1(h).$$
Hence
\begin{equation}
\label{eq:Restrictpsir}
\Res^G_H(\psi_r)=\dis \sum_{i \in I} \theta_i .
\end{equation}

\medskip
If we now take any integer $w \geq 1$, then the irreducible complex characters of $G_w$ and $H_w$ are constructed as in Section \ref{sec:WreathProducts}.

\smallskip
\noindent
The irreducible complex characters of $G_w=G \wr \sym_w$ are
parametrized by the $p$-tuples of partitions of $w$ as follows. For
any $\alpha=(\alpha^1, \, \ldots , \, \alpha^p) \Vdash w$, $\chi^{\alpha} \in \Irr(G_w)$ is given by
$$\chi^{\alpha}= \Ind_{ \prod_{1 \leq i \leq p} 
G_{|\alpha^i|}}^{G_w} (\Psi^{\alpha})=\Ind_{\prod_{1 \leq i \leq p} G_{|\alpha^i|}}^{G_w} (\dis \prod_{1 \leq i \leq p} \widetilde{\psi_i^{|\alpha^i|}}
\otimes \varphi_{\alpha^i}),$$
where, for any $1 \leq i \leq p$ and $(f; \, \pi) \in G_{| \alpha^i|}$ with cycle products $g_{\nu}(f ; \, \pi)$ ($1 \leq \nu \leq c(\pi)$), we have $\widetilde{\psi_i^{|\alpha^i|}}(f; \, \pi)=\prod_{\nu=1}^{c(\pi)} \psi_i(g_{\nu}(f ; \, \pi))$ and $\varphi_{\alpha^i}(f; \, \pi)=\chi_{\alpha^i}(\pi)$.

\smallskip
\noindent
Note for future reference that, in the above notation, if
$\alpha^r = \emptyset$, then $\mathbb{Z}_p^w \subseteq \ker
(\chi^{\alpha})$. Indeed, for all $1 \leq i \leq p$, $i \neq r$,
we have $ \Res_{\mathbb{Z}_p}^G(\psi_i)= 1_{\mathbb{Z}_p}$; thus
$\Res_{\mathbb{Z}_p^{|\alpha^i|}}^{G^{|\alpha^i|}}(\psi_i^{|\alpha^i|})=
1_{\mathbb{Z}_p^{|\alpha^i|}}$, so that
$\widetilde{\psi_i^{|\alpha^i|}}(g)=\widetilde{\psi_i^{|\alpha^i|}}(1)$
for all $g \in \mathbb{Z}_p^{|\alpha^i|}$,
and $(\widetilde{\psi_i^{|\alpha^i|}} \otimes
\varphi_{\alpha_i})(g)=(\widetilde{\psi_i^{|\alpha^i|}} \otimes
\varphi_{\alpha_i})(1)$ for all $g \in \mathbb{Z}_p^{|\alpha^i|}$. Since
$\Z_p^w \leq G^w \triangleleft G \wr \sym_w$, we easily get that, for all $g
\in \mathbb{Z}_p^w$, $\chi^{\alpha}(g)=\chi^{\alpha}(1)$. In particular, if
$\alpha^r = \emptyset$, then $\chi^{\alpha}=\xi \circ \overline{\varpi}$ for some $\xi \in \Irr(H_w)$, where $\overline{\varpi} \colon G_w=(\Z_p)^w \rtimes (\Z_{p-1} \wr \sym_w) \longrightarrow \Z_{p-1} \wr \sym_w=H_w$ is the canonical surjection.

\medskip
\noindent
The irreducible complex characters of $H_w=H \wr \sym_w$ are
parametrized by the $(p-1)$-tuples of partitions of $w$ as follows. For
any $\alpha=(\alpha^1, \, \ldots , \, \alpha^{r-1}, \, \alpha^{r+1} , \, \ldots , \,  \alpha^p) \Vdash w$, $\xi^{\alpha} \in \Irr(H_w)$ is given by
$$\xi^{\alpha}= \Ind_{ \prod_{i \in I} 
H_{|\alpha^i|}}^{H_w} (\Theta^{\alpha})=\Ind_{\prod_{i \in I} H_{|\alpha^i|}}^{H_w} (\dis \prod_{i \in I} \widetilde{\theta_i^{|\alpha^i|}}
\otimes \zeta_{\alpha^i}),$$
where, for any $i \in I$ and $(f; \, \pi) \in H_{| \alpha^i|}$ with cycle products $g_{\nu}(f ; \, \pi)$ ($1 \leq \nu \leq c(\pi)$), we have $\widetilde{\theta_i^{|\alpha^i|}}(f; \, \pi)=\prod_{\nu=1}^{c(\pi)} \theta_i(g_{\nu}(f ; \, \pi))$ and $\zeta_{\alpha^i}(f; \, \pi)=\chi_{\alpha^i}(\pi)$.

\medskip
The following result will be useful when we next consider the restriction to $H_w$ of some irreducible characters of $G_w$.

\begin{lemma}
\label{lem:easyrestrictions}
For any integer $k \geq 1$, $i \in I$ and $\la \vdash k$, we have
$$\Res^{G_k}_{H_k}(\widetilde{\psi_i^k})=\widetilde{\theta_i^k} \qquad \mbox{and} \qquad \Res^{G_k}_{H_k}(\varphi_{\la})=\zeta_{\la}.$$

\end{lemma}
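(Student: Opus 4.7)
The proof should be a direct unpacking of the definitions given in Section \ref{sec:WreathProducts} and Section \ref{sec:IrrCharWreath}, using only the fact that $\psi_i=\theta_i\circ\varpi$ for $i\in I$ and that $\varpi\colon G\to H$ restricts to the identity on the complement $H=\Z_{p-1}\leq G$.

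For the first equality, take any element $(f;\pi)\in H_k$, so $f=(h_1,\ldots,h_k)\in H^k\subseteq G^k$ and $\pi\in\sym_k$. The plan is to observe that each cycle product $g_\nu(f;\pi)$ of $(f;\pi)$, being a product of the $h_j$'s in $N=H$, already lies in $H$. Since $\psi_i=\theta_i\circ\varpi$ and $\varpi$ is the identity on $H$, we get $\psi_i(g_\nu(f;\pi))=\theta_i(g_\nu(f;\pi))$ for every $\nu$. Taking the product over $1\leq \nu\leq c(\pi)$ and invoking the explicit formula
$$\widetilde{\psi_i^{k}}(f;\pi)=\prod_{\nu=1}^{c(\pi)}\psi_i(g_\nu(f;\pi)),\qquad \widetilde{\theta_i^{k}}(f;\pi)=\prod_{\nu=1}^{c(\pi)}\theta_i(g_\nu(f;\pi))$$
recorded in Section \ref{sec:WreathProducts}, gives $\Res^{G_k}_{H_k}(\widetilde{\psi_i^{k}})(f;\pi)=\widetilde{\theta_i^{k}}(f;\pi)$.

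For the second equality, the plan is even shorter: both $\varphi_\la$ and $\zeta_\la$ are defined by the formula $(f;\pi)\mapsto \chi_\la(\pi)$, evaluated on the $\sym_k$-component $\pi$ only, and this definition is insensitive to whether the base group of the wreath product is $G$ or its subgroup $H$. Hence for $(f;\pi)\in H_k$ we have $\varphi_\la(f;\pi)=\chi_\la(\pi)=\zeta_\la(f;\pi)$, which gives the required restriction identity.

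There is no genuine obstacle here; the statement is purely a bookkeeping lemma whose role is to prepare the restriction calculation of $\chi^\alpha$ from $G_w$ to $H_w$ in later sections. The only subtle point to mention, and the one I would be careful to record, is that the equality $\psi_i(h)=\theta_i(h)$ for $h\in H$ uses the specific choice of labelling of $\Irr(G)$ made just before \eqref{eq:Restrictpsii}, together with the observation that the cycle product of an element of $H_k$ genuinely remains in $H$ rather than in the larger group $G$.
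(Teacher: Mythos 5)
Your proposal is correct and follows essentially the same route as the paper: for the first identity you note that the cycle products of an element of $H_k$ lie in $H$ and invoke $\Res^G_H(\psi_i)=\theta_i$ (which the paper records as equation~(\ref{eq:Restrictpsii}), derived from exactly the fact $\psi_i=\theta_i\circ\varpi$ you cite), then multiply over cycles; for the second identity you observe both $\varphi_\la$ and $\zeta_\la$ send $(f;\pi)$ to $\chi_\la(\pi)$. This matches the paper's argument point for point.
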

\begin{proof}
Take any $(f; \, \pi) \in H_k$ (i.e. $(f ; \, \pi)=(1, \, (f ; \, \pi))\in \Z_p^k \rtimes H_k=G_k$, where $f\in H^k$ and $\pi \in \sym_k$), and let $g_{\nu}(f; \, \pi)$ ($1 \leq \nu \leq c(\pi)$) be the cycle products of $(f; \, \pi)$. Note that, for all $1 \leq \nu \leq c(\pi)$, $g_{\nu}(f; \, \pi) \in H$ (as a product of elements of $H$). Since, by (\ref{eq:Restrictpsii}), $\Res^G_H(\psi_i)=\theta_i$, this yields
$$\widetilde{\psi_i^k}(f; \, \pi)= \dis \prod_{\nu=1}^{c(\pi)} \psi_i ( g_{\nu}(f; \, \pi)) = \dis \prod_{\nu=1}^{c(\pi)} \theta_i ( g_{\nu}(f; \, \pi)) = \widetilde{\theta_i^k}(f; \, \pi),$$as claimed. The second part is immediate, as, for any $(f; \, \pi) \in H_k$, we have $\varphi_{\la}(f; \, \pi)=\chi_{\la}(\pi)=\zeta_{\la}(f; \, \pi)$.

\end{proof}

We will also need the following in Section \ref{sec:SpecialCase}.
\begin{lemma}
\label{lem:easyrestrictionsr}
For any integer $k \geq 1$ and any $(f; \, \pi) \in H_k$ with cycle products $g_{\nu}(f; \, \pi)$ ($1 \leq \nu \leq c(\pi)$), we have
$$\widetilde{\psi_r^k}(f; \, \pi)= \left\{ \begin{array}{cl} (p-1)^{c(\pi)} & \mbox{if $g_{\nu}(f; \, \pi)=1$ for all $1 \leq \nu \leq c(\pi)$}, \\ 0 & \mbox{otherwise}.\end{array} \right.$$
\end{lemma}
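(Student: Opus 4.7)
The plan is a direct computation using the definition of $\widetilde{\psi_r^k}$ together with equation (\ref{eq:respsir}). Recall that, by definition,
$$\widetilde{\psi_r^k}(f;\,\pi)=\prod_{\nu=1}^{c(\pi)}\psi_r\bigl(g_\nu(f;\,\pi)\bigr).$$
So the statement will follow as soon as we know the value of each factor $\psi_r(g_\nu(f;\,\pi))$.

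First I would observe that, since $(f;\,\pi)\in H_k$, we have $f\in H^k$, and therefore each cycle product $g_\nu(f;\,\pi)$, being a product of coordinates of $f$, lies in $H$ (exactly as in the proof of Lemma \ref{lem:easyrestrictions}). Consequently, $\psi_r(g_\nu(f;\,\pi))$ is obtained by evaluating $\Res^G_H(\psi_r)$ at an element of $H$.

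By equation (\ref{eq:respsir}), $\Res^G_H(\psi_r)=(p-1)\delta_1$. Thus for each $\nu$,
$$\psi_r\bigl(g_\nu(f;\,\pi)\bigr)=\begin{cases} p-1 & \text{if } g_\nu(f;\,\pi)=1,\\ 0 & \text{otherwise.}\end{cases}$$
Substituting into the product, I get $\widetilde{\psi_r^k}(f;\,\pi)=(p-1)^{c(\pi)}$ when all $c(\pi)$ cycle products equal $1$, and $0$ as soon as any one of them is nontrivial, which is exactly the claim.

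There is essentially no obstacle here: the lemma is just the combination of the product formula for extended characters of wreath products (\cite[Lemma 4.3.9]{James-Kerber}, already used in the paper) with the explicit restriction formula already established in (\ref{eq:respsir}). The only point worth highlighting in the writeup is why the cycle products land in $H$ rather than in the full group $G$, so that (\ref{eq:respsir}) can indeed be applied.
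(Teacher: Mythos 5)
Your proof is correct and is essentially the same as the paper's: both start from the product formula $\widetilde{\psi_r^k}(f;\pi)=\prod_{\nu}\psi_r(g_\nu(f;\pi))$, note that the cycle products of an element of $H_k$ lie in $H$ (as in Lemma \ref{lem:easyrestrictions}), and then apply equation (\ref{eq:respsir}) to each factor.
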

\begin{proof}
Take any $(f; \, \pi) \in H_k$ as in the proof of Lemma \ref{lem:easyrestrictions} and again note that, for all $1 \leq \nu \leq c(\pi)$, $g_{\nu}(f; \, \pi) \in H$. We therefore have
$$\widetilde{\psi_r^k}(f; \, \pi)= \dis \prod_{\nu=1}^{c(\pi)} \psi_r(g_{\nu}(f; \, \pi)) = \dis \prod_{\nu=1}^{c(\pi)} \Res^G_H(\psi_r)(g_{\nu}(f; \, \pi)).$$
The result immediately follows from (\ref{eq:respsir}).
\end{proof}

We can now show how our parametrizations for $\Irr(G_w)$ and $\Irr(H_w)$ are related.

\begin{theorem}
\label{thm:Compatible}
Take any $(\alpha^1, \, \ldots , \alpha^{r-1}, \, \emptyset , \, \alpha^{r+1}, \, \ldots \alpha^p) \Vdash w$.

\noindent
If we let $\alpha=(\alpha^1, \, \ldots , \alpha^{r-1}, \, \alpha^{r+1}, \, \ldots \alpha^p)$ and $\hat{\alpha}=(\alpha^1, \, \ldots , \alpha^{r-1}, \, \emptyset , \, \alpha^{r+1}, \, \ldots \alpha^p)$, then we have
$$\Res^{G_w}_{H_w} \left(\chi^{\hat{\alpha}}\right) = \xi^{\alpha}.$$
\end{theorem}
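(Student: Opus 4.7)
The plan is to exploit the observation made just before Lemma \ref{lem:easyrestrictions}: when $\alpha^r=\emptyset$, the character $\chi^{\hat{\alpha}}$ has $\Z_p^w$ in its kernel, and so factors through $\overline{\varpi}$ as $\chi^{\hat{\alpha}}=\xi'\circ\overline{\varpi}$ for a unique $\xi'\in\Irr(H_w)$. Because $\overline{\varpi}$ restricts to the identity on $H_w\le G_w$, this immediately gives $\Res^{G_w}_{H_w}(\chi^{\hat{\alpha}})=\xi'$, so the problem reduces to identifying $\xi'$ with $\xi^{\alpha}$.

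First I would rewrite $\chi^{\hat{\alpha}}=\Ind_P^{G_w}(\Psi^{\hat{\alpha}})$, where the trivial factor $G_{|\alpha^r|}=G_0=1$ has been dropped, $P=\prod_{i\in I}G_{|\alpha^i|}$, and $\Psi^{\hat{\alpha}}=\prod_{i\in I}(\widetilde{\psi_i^{|\alpha^i|}}\otimes\varphi_{\alpha^i})$. Using $G_k=\Z_p^k\rtimes H_k$, we have $P=\Z_p^w\rtimes\prod_{i\in I}H_{|\alpha^i|}$, so $\Z_p^w\trianglelefteq P$ and $\overline{\varpi}$ induces an isomorphism $P/\Z_p^w\cong P\cap H_w=\prod_{i\in I}H_{|\alpha^i|}$.

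Next I would show that $\Psi^{\hat{\alpha}}$ has $\Z_p^w$ in its kernel and factors through $\overline{\varpi}|_P$ as $\Theta^{\alpha}$. The same cycle-product argument that establishes $\Z_p^w\le\ker\chi^{\hat{\alpha}}$ shows, applied to each tensor factor separately, that $\widetilde{\psi_i^{|\alpha^i|}}\otimes\varphi_{\alpha^i}$ has $\Z_p^{|\alpha^i|}$ in its kernel for every $i\in I$. Lemma \ref{lem:easyrestrictions} then identifies the factored character on $P\cap H_w$ with $\prod_{i\in I}(\widetilde{\theta_i^{|\alpha^i|}}\otimes\zeta_{\alpha^i})=\Theta^{\alpha}$.

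The last step is to invoke the standard compatibility of induction with inflation: if $N\trianglelefteq G$ with $N\le P\le G$ and $\chi\in\Irr(P)$ has $N$ in its kernel, writing $\chi=\overline{\chi}\circ\pi_P$ with $\pi_P\colon P\to P/N$, then $\Ind_P^G(\chi)=\bigl(\Ind_{P/N}^{G/N}(\overline{\chi})\bigr)\circ\pi$, where $\pi\colon G\to G/N$. Applied with $N=\Z_p^w$, $\pi=\overline{\varpi}$ and $\overline{\chi}=\Theta^{\alpha}$, this yields $\chi^{\hat{\alpha}}=\xi^{\alpha}\circ\overline{\varpi}$, hence $\xi'=\xi^{\alpha}$ as wanted. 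There is no substantive obstacle here; the only item needing a brief check is that each $\widetilde{\psi_i^{|\alpha^i|}}$ ($i\in I$) is trivial on all of $\Z_p^{|\alpha^i|}\le G_{|\alpha^i|}$ (not merely on the copy sitting inside the base group), which follows from the cycle-product formula since such an element has identity permutation component, so its cycle products are its own entries, all lying in $\ker\psi_i\supseteq\Z_p$.
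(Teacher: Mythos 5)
Your proof is correct, and it takes a genuinely different route from the paper's. The paper argues pointwise: since $H_w\leq G_w$ and the Young subgroup $\prod_{i\in I}\sym_{|\alpha^i|}$ supplies a common set of left coset representatives for $\prod_{i\in I}G_{|\alpha^i|}$ in $G_w$ and for $\prod_{i\in I}H_{|\alpha^i|}$ in $H_w$, one can write out the induction formula for $\chi^{\hat\alpha}$ at $g\in H_w$, note that each conjugate $\sigma_k g\sigma_k^{-1}$ still lies in $H_w$, and use Lemma \ref{lem:easyrestrictions} to see that the summands coincide term by term with those in the induction formula for $\xi^{\alpha}$. You instead exploit the kernel observation made just before the theorem: $\chi^{\hat\alpha}$ is inflated from $H_w$, so restriction to $H_w$ is simply the inverse of inflation, and identifying the inflated character reduces to the standard compatibility of induction with inflation (with $N=\Z_p^w\trianglelefteq G_w$, $N\leq P=\prod_{i\in I}G_{|\alpha^i|}$, and $\Psi^{\hat\alpha}$ deflating to $\Theta^{\alpha}$ on $P/N\cong\prod_{i\in I}H_{|\alpha^i|}$, again via Lemma \ref{lem:easyrestrictions}). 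Your version is more conceptual and essentially proves the rephrased statement of Remark \ref{rem:Compatible} directly; the paper's version is more elementary and self-contained, relying only on the induction formula rather than on the inflation--induction compatibility fact, which it never states. The one point you flag yourself --- that each $\widetilde{\psi_i^{|\alpha^i|}}\otimes\varphi_{\alpha^i}$ kills all of $\Z_p^{|\alpha^i|}$ --- is handled correctly by the cycle-product formula, and the identification $P\cap H_w=\prod_{i\in I}H_{|\alpha^i|}$ together with $P=\Z_p^w\rtimes(P\cap H_w)$ makes the deflation well defined. No gaps.
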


\begin{proof}
Let $\alpha=(\alpha^1, \, \ldots , \alpha^{r-1}, \, \alpha^{r+1}, \, \ldots \alpha^p)$ and $\hat{\alpha}=(\alpha^1, \, \ldots , \alpha^{r-1}, \, \emptyset , \, \alpha^{r+1}, \, \ldots \alpha^p)$ be as above. Then
$$\chi^{\hat{\alpha}}= \Ind_{ \prod_{1 \leq i \leq p} 
G_{|\alpha^i|}}^{G_w} (\Psi^{\hat{\alpha}})=\Ind_{ \prod_{i \in I} 
G_{|\alpha^i|}}^{G_w} (\Psi^{\alpha}),$$
where $\Psi^{\alpha}=\dis \prod_{i \in I} \widetilde{\psi_i^{|\alpha^i|}}
\otimes \varphi_{\alpha^i}$.

Let $\sa_1, \, \ldots , \, \sa_m$ be left coset representatives for the Young subgroup $\prod_{i \in I} \sym_{|\alpha^i|}$ of $\sym_w$. In particular, $(1; \, \sa_1), \, \ldots , \, (1; \, \sa_m)$ are also left coset representatives for $\prod_{i \in I} H_{|\alpha^i|}$ in $H_w$, and for $\prod_{i \in I} G_{|\alpha^i|}$ in $G_w$ (where, in the first instance, $(1; \, \sa_i)=(1_{H}; \, \sa_i)$ for $1 \leq i \leq m$ and, in the second, $(1; \, \sa_i)=(1_{G}; \, \sa_i)$ for $1 \leq i \leq m$). Since it will always be clear which group we are working in, we will denote all these coset representatives as $\sa_1, \, \ldots , \, \sa_m$.

By the formula for character induction, for all $g \in H_w$, we have $$\chi^{\hat{\alpha}}(g)=\dis \sum_{k=1}^m \overbrace{\Psi^{\alpha}}(\sa_k g \sa_k^{-1}),$$where
$$\overbrace{\Psi^{\alpha}}(\sa_k g \sa_k^{-1}) = \left\{ \begin{array}{ccc} 0 & \mbox{if} & \sa_k g \sa_k^{-1} \not\in \dis \prod_{i \in I} 
G_{|\alpha^i|}, \\ \Psi^{\alpha}(\sa_k g \sa_k^{-1}) & \mbox{if} & \sa_k g \sa_k^{-1} \in \dis \prod_{i \in I} 
G_{|\alpha^i|}. \end{array} \right.$$
Now, if $g=(f; \, \rho) \in G_w$ (with $f \in H^w \leq G^w$), then, for all $1 \leq k \leq m$, we have (see \cite[4.2.6]{James-Kerber}),
$$  \sa_k g \sa_k^{-1} = (1; \, \sa_k) (f ; \, \rho) (1; \, \sa_k)^{-1}= (f_{\sa_k}; \, \sa_k \rho \sa_k^{-1}),$$
where, if $f=((1,f_1), \, (1, f_2), \, \ldots , \, (1,f_w)) \in G^w$ (with $(f_1, \, \ldots , \, f_w) \in H^w$), then $f_{\sa_k}=((1,f_{\sa_k^{-1}(1)}), \, (1, f_{\sa_k^{-1}(2)}), \, \ldots , \, (1,f_{\sa_k^{-1}(w)}))$. In particular, $f_{\sa_k} \in H^w$, and $\sa_k g \sa_k^{-1}= (f_{\sa_k}; \, \sa_k \rho \sa_k^{-1}) \in H_w$. Hence, for all $1 \leq k \leq m$, we have $\sa_k g \sa_k^{-1} \in \dis \prod_{i \in I} 
G_{|\alpha^i|}$ if and only if $\sa_k g \sa_k^{-1} \in \dis \prod_{i \in I} 
H_{|\alpha^i|}$. If that is the case, then we can write $\sa_k g \sa_k^{-1}= \prod_{i \in I} (g_i ; \, \rho_i)$, where $(g_i ; \, \rho_i) \in H_{|\alpha^i|}$ for all $i \in I$, and, still in that case, we obtain
$$\begin{array}{rcl} \Psi^{\alpha}(\sa_k g \sa_k^{-1}) & = & \dis \prod_{i \in I} \left[ \widetilde{\psi_i^{|\alpha^i|}}
\otimes \varphi_{\alpha^i} \right] (g_i ; \, \rho_i) \\  & = & \dis \prod_{i \in I} \widetilde{\psi_i^{|\alpha^i|}} (g_i ; \, \rho_i) 
 \cdot \varphi_{\alpha^i} (g_i ; \, \rho_i) \\ & = & \dis \prod_{i \in I} \Res^{G_{|\alpha^i|}}_{H_{|\alpha^i|}} \left(\widetilde{\psi_i^{|\alpha^i|}}\right) (g_i ; \, \rho_i) \cdot
\Res^{G_{|\alpha^i|}}_{H_{|\alpha^i|}} \left( \varphi_{\alpha^i} \right) (g_i ; \, \rho_i) \\ 
 & = & \dis \prod_{i \in I} \widetilde{\theta_i^{|\alpha^i|}} (g_i ; \, \rho_i) \cdot  \zeta_{\alpha^i} (g_i ; \, \rho_i) \qquad \mbox{(by Lemma \ref{lem:easyrestrictions})} \\ 
 & = & \dis \prod_{i \in I} \left[ \widetilde{\theta_i^{|\alpha^i|}} \otimes \zeta_{\alpha^i} \right] (g_i ; \, \rho_i) \\
 & = & \left[ \dis \prod_{i \in I} \widetilde{\theta_i^{|\alpha^i|}} \otimes \zeta_{\alpha^i} \right] (\sa_k g \sa_k^{-1}) \\
 & = & \Theta^{\alpha}(\sa_k g \sa_k^{-1}) .
 \end{array}$$

We therefore have, for any $1 \leq k \leq m$,
$$\overbrace{\Psi^{\alpha}}(\sa_k g \sa_k^{-1}) = \left\{ \begin{array}{ccc} 0 & \mbox{if} & \sa_k g \sa_k^{-1} \not\in \dis \prod_{i \in I} 
H_{|\alpha^i|}, \\ \Theta^{\alpha}(\sa_k g \sa_k^{-1}) & \mbox{if} & \sa_k g \sa_k^{-1} \in \dis \prod_{i \in I} 
H_{|\alpha^i|}. \end{array} \right.$$
Since $\sa_1, \, \ldots , \, \sa_m$ are also left coset representatives for $\prod_{i \in I} H_{|\alpha^i|}$ in $H_w$, we get, for all $g \in H_w$,
$$\chi^{\hat{\alpha}}(g)=\dis \sum_{k=1}^m \overbrace{\Psi^{\alpha}}(\sa_k g \sa_k^{-1})=\dis \sum_{k=1}^m \overbrace{\Theta^{\alpha}}(\sa_k g \sa_k^{-1})=\Ind_{ \prod_{i \in I} 
H_{|\alpha^i|}}^{H_w} (\Theta^{\alpha})(g)=\xi^{\alpha}(g),$$
as claimed.

\end{proof}

\begin{remark}
\label{rem:Compatible}
In view of the observation we made when parametrizing the irreducible characters of $G_w$, the statement of Theorem \ref{thm:Compatible} can be rephrased as: if $\chi \in \Irr(G_w)$ is labelled by $(\alpha^1, \, \ldots , \alpha^{r-1}, \, \emptyset , \, \alpha^{r+1}, \, \ldots \alpha^p)$, then $\chi=\xi \circ \overline{\varpi}$, where $\xi \in \Irr(H_w)$ is labelled by $(\alpha^1, \, \ldots , \alpha^{r-1}, \alpha^{r+1}, \, \ldots \alpha^p)$ and $\overline{\varpi} \colon G_w \longrightarrow H_w$ is the canonical surjection.

\end{remark}

\section{Induction of some special characters}
\label{sec:SpecialCase}
In this section, we fix any integer $k \geq 1$, and we will describe the induced characters $\Ind^{G_k}_{H_k}\left(\widetilde{\theta_i^k} \otimes \zeta_{\alpha}\right)$ for $i \in I$ and $\alpha \vdash k$ (see Theorem \ref{thm:inducedthetaialpha}). We start by some results on multiplicities.

\begin{lemma}
\label{lem:multiplicitythetai}
For any $i \in I$, the multiplicity of the irreducible character $\widetilde{\theta_i^k}$ in $\Res^{G_k}_{H_k}\left(\widetilde{\psi_r^k}\right)$ is 1.
\end{lemma}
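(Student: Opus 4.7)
The plan is to compute the scalar product $\langle \Res^{G_k}_{H_k}(\widetilde{\psi_r^k}), \widetilde{\theta_i^k}\rangle_{H_k}$ directly, using the explicit formula for $\widetilde{\psi_r^k}$ from Lemma \ref{lem:easyrestrictionsr} together with the fact that the $\theta_i$ are linear characters of $H$. By Frobenius reciprocity this scalar product is what we want; alternatively one just unfolds the definition of the inner product on $H_k$.

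First, I would use Lemma \ref{lem:easyrestrictionsr} to restrict the sum to the support of $\widetilde{\psi_r^k}\!\restriction_{H_k}$, namely those $(f;\pi)\in H_k$ for which every cycle product $g_\nu(f;\pi)$ equals $1$. On any such element, since $\theta_i$ is a linear character of $H$ with $\theta_i(1)=1$, we have
\[
\widetilde{\theta_i^k}(f;\pi)=\prod_{\nu=1}^{c(\pi)}\theta_i(g_\nu(f;\pi))=1,
\]
while $\widetilde{\psi_r^k}(f;\pi)=(p-1)^{c(\pi)}$. So the only thing left to do is count, for each $\pi\in\sym_k$, the number of tuples $f\in H^k$ such that all cycle products of $(f;\pi)$ equal $1$.

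This is a standard count: for a cycle of $\pi$ of length $\ell$, the corresponding cycle product is a product of $\ell$ freely chosen elements of $H$, and fixing this product to be $1$ leaves $|H|^{\ell-1}=(p-1)^{\ell-1}$ choices. Multiplying over the cycles of $\pi$ gives $(p-1)^{k-c(\pi)}$ admissible tuples $f$. Putting everything together,
\[
\langle \Res^{G_k}_{H_k}(\widetilde{\psi_r^k}),\widetilde{\theta_i^k}\rangle_{H_k}
=\frac{1}{|H_k|}\sum_{\pi\in\sym_k}(p-1)^{c(\pi)}\cdot(p-1)^{k-c(\pi)}
=\frac{k!\,(p-1)^k}{(p-1)^k\,k!}=1,
\]
since $|H_k|=(p-1)^k k!$. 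Because $\widetilde{\theta_i^k}\in\Irr(H_k)$, this scalar product is exactly the multiplicity of $\widetilde{\theta_i^k}$ in $\Res^{G_k}_{H_k}(\widetilde{\psi_r^k})$, and the result follows.

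No step is really an obstacle: the one mild subtlety is noticing that $\widetilde{\theta_i^k}$ takes the value $1$ on the entire support of $\widetilde{\psi_r^k}\!\restriction_{H_k}$ (so we never actually need to evaluate $\theta_i$ at a non-trivial element), and that the counts of $f$ and the factor $(p-1)^{c(\pi)}$ combine to produce the uniform value $(p-1)^k$ independently of $\pi$, which is what makes the scalar product come out to exactly $1$.
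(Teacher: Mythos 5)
Your proof is correct and follows essentially the same route as the paper: restrict the inner-product sum to the support of $\widetilde{\psi_r^k}\!\restriction_{H_k}$ via Lemma \ref{lem:easyrestrictionsr}, observe that $\widetilde{\theta_i^k}$ equals $1$ there, count the admissible $f$ for each $\pi$, and see that the $(p-1)$-powers collapse to $(p-1)^k$ independently of $\pi$. The only cosmetic difference is that you write the count directly as $(p-1)^{k-c(\pi)}$ rather than as $\prod_{\nu}(p-1)^{|\pi_\nu|-1}$.
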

\begin{proof} Take any $i \in I$, and let $A_{i,r}$ be the multiplicity of $\widetilde{\theta_i^k}$ in $\Res^{G_k}_{H_k}\left(\widetilde{\psi_r^k}\right)$. Then
$$\begin{array}{rcl} A_{i,r} & = & \dis \frac{1}{|H_k|} \sum_{(f ; \, \pi) \in H_k} \widetilde{\psi_r^k}(f ; \, \pi) \cdot \overline{\widetilde{\theta_i^k}(f ; \, \pi)} \\ & = & \dis \frac{1}{|H_k|} \sum_{\pi \in \sym_k} \sum_{f \in H^k \; \mbox{\tiny{such that}} \atop g_{\nu}(f; \, \pi)=1 \, \mbox{\tiny{for all}} \; 1 \leq \nu \leq c(\pi)} (p-1)^{c(\pi)} \cdot \overline{\widetilde{\theta_i^k}(f ; \, \pi)} \quad \mbox{(by Lemma \ref{lem:easyrestrictionsr}).}
\end{array}$$
And, whenever $(f ; \, \pi) \in H_k$ is such that $g_{\nu}(f; \, \pi)=1$ for all $1 \leq \nu \leq c(\pi)$, we have
$$\widetilde{\theta_i^k}(f ; \, \pi)=\dis \prod_{\nu=1}^{c(\pi)} \theta_i ( g_{\nu}(f; \, \pi)) = \dis \prod_{\nu=1}^{c(\pi)}1 = 1.$$
Hence
$$\begin{array}{rcl} A_{i,r} & = & \dis \frac{1}{(p-1)^k \cdot k!} \sum_{\pi \in \sym_k} \sum_{f \in H^k \; \mbox{\tiny{such that}} \atop g_{\nu}(f; \, \pi)=1 \, \mbox{\tiny{for all}} \; 1 \leq \nu \leq c(\pi)} (p-1)^{c(\pi)} \\ & = &\dis \frac{1}{(p-1)^k \cdot k!} \sum_{\pi \in \sym_k}(p-1)^{c(\pi)} \cdot |{\cal G}(\pi)|, \end{array}$$
where, for any $\pi \in \sym_k$, ${\cal G}(\pi)= \{ f \in H^k \, | \, g_{\nu}(f; \, \pi)=1 \; \mbox{for all} \;  1 \leq \nu \leq c(\pi)\}$. 

Now, if we write $\pi=\pi_1 * \cdots * \pi_{c(\pi)}$, a product of disjoint cycles, we see that $f \in {\cal G}(\pi)$ if and only if, after reordering the ``coordinates'' of $f$ according to the cycles of $\pi$, $f$ is of the form $(f_1, \, \ldots , \, f_{c(\pi)})$, where each $f_{\nu}$ is a $|\pi_{\nu}|$-tuple of elements of (the abelian group) $H$ whose product is $g_{\nu}(f, \pi)=1$. So, for each $1 \leq \nu \leq c(\pi)$, we can choose the first $|\pi_{\nu}|-1$ coordinates of $f_{\nu}$ to be anything we want in $H$, and the last coordinate is imposed by the condition $g_{\nu}(f, \pi)=1$. This means that, for each $1 \leq \nu \leq c(\pi)$, we have $(p-1)^{|\pi_{\nu}|-1}$ choices for $f_{\nu}$, so that $$|{\cal G}(\pi)|= \dis \prod_{\nu=1}^{c(\pi)} (p-1)^{|\pi_{\nu}|-1}.$$
We therefore have
 

$$\begin{array}{rcl} (p-1)^k \cdot k! \cdot A_{i,r} & = & \dis \sum_{\pi \in \sym_k}(p-1)^{c(\pi)} \cdot \prod_{\nu=1}^{c(\pi)} (p-1)^{|\pi_{\nu}|-1} =  \dis \sum_{\pi \in \sym_k} \prod_{\nu=1}^{c(\pi)} (p-1)^{|\pi_{\nu}|}\\ & & \\
 & = & \dis \sum_{\pi \in \sym_k}  (p-1)^{\sum_{\nu=1}^{c(\pi)}|\pi_{\nu}|} =  \dis \sum_{\pi \in \sym_k}  (p-1)^k =  \dis (p-1)^k \cdot k!
 
\end{array}$$
and $A_{i,r}=1$, as claimed.

\medskip
\noindent
Note that our argument shows that, for any $\pi \in \sym_k$,
\begin{equation}
\label{eq:sumpsir}
\sum_{f  \in H^k} \widetilde{\psi_r^k}(f ; \, \pi) \cdot \overline{\widetilde{\theta_i^k} (f ; \, \pi)}=(p-1)^k.
\end{equation}
We will use this to prove the next result.

\end{proof}
\noindent
Lemma \ref{lem:multiplicitythetai} can be extended as follows:
\begin{lemma}
\label{lem:multiplicitythetaibeta}
For any $i \in I$ and any partitions $\alpha$ and $\beta$ of $k$, the multiplicity of the irreducible character $\widetilde{\theta_i^k} \otimes \zeta_{\alpha}$ in $\Res^{G_k}_{H_k}\left(\widetilde{\psi_r^k} \otimes \varphi_{\beta}\right)$ is $\delta_{\alpha,\beta}$.
\end{lemma}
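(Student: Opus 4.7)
The plan is to compute the multiplicity directly as an inner product on $H_k$ and reduce it to the orthogonality relation for symmetric group characters, using the key identity (\ref{eq:sumpsir}) that was harvested from the proof of Lemma \ref{lem:multiplicitythetai}.

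First I would write out the multiplicity as
$$M := \langle \Res^{G_k}_{H_k}(\widetilde{\psi_r^k} \otimes \varphi_{\beta}), \, \widetilde{\theta_i^k} \otimes \zeta_{\alpha}\rangle_{H_k} = \frac{1}{|H_k|} \sum_{(f;\pi) \in H_k} \widetilde{\psi_r^k}(f;\pi)\, \varphi_\beta(f;\pi)\, \overline{\widetilde{\theta_i^k}(f;\pi)}\, \overline{\zeta_\alpha(f;\pi)}.$$
The crucial observation is that $\varphi_\beta(f;\pi) = \chi_\beta(\pi)$ and $\zeta_\alpha(f;\pi) = \chi_\alpha(\pi)$ depend only on $\pi$, so they factor out of the inner sum over $f \in H^k$. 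This lets me rewrite
$$M = \frac{1}{(p-1)^k \cdot k!} \sum_{\pi \in \sym_k} \chi_\beta(\pi)\, \overline{\chi_\alpha(\pi)} \left( \sum_{f \in H^k} \widetilde{\psi_r^k}(f;\pi)\, \overline{\widetilde{\theta_i^k}(f;\pi)} \right).$$

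Next I would invoke equation (\ref{eq:sumpsir}), which states that the inner sum in parentheses is exactly $(p-1)^k$ for every $\pi \in \sym_k$, independently of $\pi$. Substituting this yields
$$M = \frac{(p-1)^k}{(p-1)^k \cdot k!} \sum_{\pi \in \sym_k} \chi_\beta(\pi)\, \overline{\chi_\alpha(\pi)} = \langle \chi_\beta,\, \chi_\alpha \rangle_{\sym_k} = \delta_{\alpha,\beta},$$
by the first orthogonality relation for irreducible characters of $\sym_k$.

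There is essentially no obstacle here: once one notices that the $\varphi_\beta$ and $\zeta_\alpha$ factors are constant on each fiber $\{(f;\pi) : f \in H^k\}$, the computation decouples into the (already established) fiber sum (\ref{eq:sumpsir}) and the standard orthogonality of $\Irr(\sym_k)$. The only mild care needed is to ensure the identification of $(f;\pi) \in H_k$ sitting inside $G_k$ (so that $\widetilde{\psi_r^k}$ makes sense on such elements), but this is already handled by Lemma \ref{lem:easyrestrictionsr} as used in the proof of Lemma \ref{lem:multiplicitythetai}. In particular, the special case $\beta = (k)$ (giving $\chi_\beta \equiv 1$) would recover Lemma \ref{lem:multiplicitythetai} with $\zeta_\alpha$ playing a role, which is a useful sanity check on the setup.
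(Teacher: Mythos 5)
Your proof is correct and follows exactly the same route as the paper's: expand the multiplicity as an inner product over $H_k$, factor the $\pi$-only quantities $\chi_\beta(\pi)\overline{\chi_\alpha(\pi)}$ out of the fiber sum over $f \in H^k$, apply the identity (\ref{eq:sumpsir}) to collapse that fiber sum to $(p-1)^k$, and conclude by orthogonality of $\Irr(\sym_k)$. The sanity check at the end, while not in the paper, is a reasonable observation and does not alter the argument.
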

\begin{proof} 
Take any $i \in I$ and any partitions $\alpha$ and $\beta$ of $k$, and let $B_{i,r,\alpha,\beta}$ be the multiplicity of $\widetilde{\theta_i^k} \otimes \zeta_{\alpha}$ in $\Res^{G_k}_{H_k}\left(\widetilde{\psi_r^k} \otimes \varphi_{\beta}\right)$. Then
$$\begin{array}{rcl} B_{i,r,\alpha,\beta} & = & \dis \frac{1}{|H_k|} \sum_{(f ; \, \pi) \in H_k} \widetilde{\psi_r^k}(f ; \, \pi) \cdot \overline{\widetilde{\theta_i^k}(f ; \, \pi)} \cdot \varphi_{\beta}(f ; \, \pi) \cdot \overline{ \zeta_{\alpha}(f ; \, \pi)}\\ & = &  \dis \frac{1}{|H_k|} \sum_{\pi \in \sym_k} \chi_{\beta}(\pi) \overline{\chi_{\alpha}(\pi)} \cdot \left( \sum_{f \in H^k} \widetilde{\psi_r^k}(f ; \, \pi) \cdot \overline{\widetilde{\theta_i^k}(f ; \, \pi)}\right)\\ 
& = &  \dis \frac{1}{|H_k|} \sum_{\pi \in \sym_k} \chi_{\beta}(\pi) \overline{\chi_{\alpha}(\pi)} \cdot (p-1)^k
\quad \mbox{(by (\ref{eq:sumpsir}))}\\
& = &  \dis \frac{(p-1)^k}{|H^k|}\cdot \frac{1}{|\sym_k|}\sum_{\pi \in \sym_k} \chi_{\beta}(\pi) \overline{\chi_{\alpha}(\pi)}\\
& = & \langle \chi_{\beta} , \, \chi_{\alpha} \rangle_{\sym_k}\\
& = & \delta_{\alpha,\beta},

\end{array}$$
as claimed.

\end{proof}

To prove our next result on multiplicities, we will use the following, which is an easy corollary of Mackey's Theorem (see \cite[Theorem (5.6)]{isaacs}) and Frobenius Reciprocity.

\begin{theorem}[Mackey]
\label{thm:mackey}
Let ${\cal K}, \, {\cal H} \leq {\cal G}$ be finite groups, and $x_1, \, \ldots , \, x_m$ be representatives for the $({\cal H},{\cal K})$-double cosets in ${\cal G}$ (i.e. ${\cal G}= {\cal H} x_1 {\cal K} \dot \cup  \cdots \dot \cup {\cal H} x_m {\cal K}$). Then, for any class functions $S$ and $T$ of ${\cal H}$ and ${\cal K}$ respectively, we have
$$\langle \Ind^{\cal G}_{\cal H}(S) , \, \Ind^{\cal G}_{\cal K} (T) \rangle_{\cal G}= \dis \sum_{i=1}^m \langle \Res^{{\cal H}^{x_i}}_{{\cal H}^{x_i} \cap {\cal K}}(S^{x_i}) , \, \Res^{\cal K}_{{\cal H}^{x_i} \cap {\cal K}} (T) \rangle_{{\cal H}^{x_i} \cap {\cal K}},$$
where the class function
$S^{x_i}$ of ${\cal H}^{x_i}=x_i^{-1}{\cal H}x_i$ is defined by $S^{x_i}(u)=S(x_iux_i^{-1})$ for all $u \in {\cal H}^{x_i}$ ($1 \leq i \leq m$).

\end{theorem}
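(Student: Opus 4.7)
The plan is to derive this inner-product Mackey formula as a straightforward combination of the Mackey subgroup (restriction) theorem and Frobenius Reciprocity, applied twice. First, I would use Frobenius Reciprocity to convert one of the two inductions into a restriction, namely
$$\langle \Ind^{\cal G}_{\cal H}(S), \Ind^{\cal G}_{\cal K}(T)\rangle_{\cal G} = \langle \Res^{\cal G}_{\cal K}\Ind^{\cal G}_{\cal H}(S), T\rangle_{\cal K}.$$
This is a purely formal manipulation, valid for any class functions $S$ and $T$, not only for characters.

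Next, I would expand $\Res^{\cal G}_{\cal K}\Ind^{\cal G}_{\cal H}(S)$ via the Mackey subgroup theorem (Isaacs, Theorem 5.2/5.6). With the $(\cal H, \cal K)$-double coset representatives $x_1, \ldots, x_m$ as in the statement, this theorem gives
$$\Res^{\cal G}_{\cal K}\Ind^{\cal G}_{\cal H}(S) \;=\; \sum_{i=1}^m \Ind^{\cal K}_{{\cal H}^{x_i}\cap {\cal K}}\Res^{{\cal H}^{x_i}}_{{\cal H}^{x_i}\cap {\cal K}}\bigl(S^{x_i}\bigr),$$
where $S^{x_i}$ is the conjugate class function on ${\cal H}^{x_i}=x_i^{-1}{\cal H}x_i$ defined by $S^{x_i}(u)=S(x_iux_i^{-1})$.

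Substituting this expansion into the inner product from the first step and using linearity, one gets a sum of inner products on $\cal K$ of the form $\langle \Ind^{\cal K}_{{\cal H}^{x_i}\cap {\cal K}}(\cdots),\, T\rangle_{\cal K}$. A second application of Frobenius Reciprocity converts each of these into
$$\langle \Res^{{\cal H}^{x_i}}_{{\cal H}^{x_i}\cap {\cal K}}(S^{x_i}),\,\Res^{\cal K}_{{\cal H}^{x_i}\cap {\cal K}}(T)\rangle_{{\cal H}^{x_i}\cap {\cal K}},$$
and summing over $i$ yields precisely the right-hand side of the theorem.

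The main obstacle will be nothing more than careful bookkeeping of conventions: in particular, checking that the direction of conjugation ($x_i^{-1}{\cal H}x_i$ versus $x_i{\cal H}x_i^{-1}$) and the orientation of double cosets (${\cal H}\backslash {\cal G}/{\cal K}$ versus ${\cal K}\backslash {\cal G}/{\cal H}$) used in the cited form of Mackey's subgroup theorem match those of the statement; if the cited reference uses the opposite convention, one simply replaces the chosen representatives by their inverses, which also form a system of double coset representatives and transform the Mackey decomposition into the form needed here. No new ideas are required beyond Frobenius Reciprocity and the subgroup form of Mackey's theorem.
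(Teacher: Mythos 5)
Your proposal is correct and follows exactly the route the paper intends: the paper does not write out a proof but simply asserts the result is ``an easy corollary of Mackey's Theorem (see \cite[Theorem (5.6)]{isaacs}) and Frobenius Reciprocity,'' and your two applications of Frobenius Reciprocity sandwiching Isaacs' Mackey subgroup formula is precisely that derivation with the details filled in.
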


We can now prove the following

\begin{theorem}
\label{thm:multiplicities}
For any $i \in I$, any $0 \leq j \leq k$, and any $\alpha \vdash k$, $\beta \vdash j$ and $\ga \vdash k-j$, we have
$$\left\langle \Ind^{G_k}_{H_k} \left( \widetilde{\theta_i^k} \otimes \zeta_{\alpha} \right) , \, \Ind^{G_k}_{G_j \times G_{k-j}} \left( \left( \widetilde{\psi_r^j} \otimes \varphi_{\beta} \right) \bt \left( \widetilde{\psi_i^{k-j}} \otimes \varphi_{\ga} \right) \right) \right\rangle_{G_k} = c^{\alpha}_{\beta, \ga},$$
the Littlewood-Richardson coefficient for the symmetric group $\sym_k$ (see \cite[Theorem 2.8.13]{James-Kerber}).

\end{theorem}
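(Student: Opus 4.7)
The plan is to apply Mackey's theorem (Theorem~\ref{thm:mackey}) to the pair of subgroups $H_k$ and $G_j \times G_{k-j}$ of $G_k$, and then pin down the resulting inner product using the restriction and multiplicity results established above. The crucial structural observation is that $G_j \times G_{k-j}$ already contains the normal subgroup $\Z_p^k = \Z_p^j \times \Z_p^{k-j}$ of $G_k$. Since $G_k = \Z_p^k \rtimes H_k$, this forces $H_k \cdot (G_j \times G_{k-j}) = G_k$, so there is a single $(H_k, G_j \times G_{k-j})$-double coset in $G_k$, with representative $1$; and $H_k \cap (G_j \times G_{k-j}) = H_j \times H_{k-j}$. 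Mackey's theorem therefore reduces the left-hand side to
\[
\left\langle \Res^{H_k}_{H_j \times H_{k-j}}\left(\widetilde{\theta_i^k} \otimes \zeta_\alpha\right),\; \Res^{G_j \times G_{k-j}}_{H_j \times H_{k-j}}\left(\left(\widetilde{\psi_r^j} \otimes \varphi_\beta\right) \bt \left(\widetilde{\psi_i^{k-j}} \otimes \varphi_\gamma\right)\right) \right\rangle_{H_j \times H_{k-j}}.
\]

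Next, I would compute each restriction explicitly. On the left-hand side, since the cycles of an element $(f;\pi) \in H_j \times H_{k-j}$ are the disjoint union of the cycles coming from its two factors, one has $\Res^{H_k}_{H_j \times H_{k-j}}(\widetilde{\theta_i^k}) = \widetilde{\theta_i^j} \bt \widetilde{\theta_i^{k-j}}$. The character $\zeta_\alpha$ is defined to agree with $\chi_\alpha$ on the $\sym_k$-part, so the Littlewood--Richardson branching rule for $\sym_k \supseteq \sym_j \times \sym_{k-j}$ (\cite[Theorem 2.8.13]{James-Kerber}) gives
\[
\Res^{H_k}_{H_j \times H_{k-j}}(\zeta_\alpha) = \sum_{\beta' \vdash j,\, \gamma' \vdash k-j} c^{\alpha}_{\beta', \gamma'}\; \zeta_{\beta'} \bt \zeta_{\gamma'}.
\]
On the right-hand side, the restriction respects the external tensor product; by Lemma~\ref{lem:easyrestrictions}, the second factor becomes the irreducible character $\widetilde{\theta_i^{k-j}} \otimes \zeta_\gamma$ of $H_{k-j}$, while the first factor equals $\Res^{G_j}_{H_j}(\widetilde{\psi_r^j}) \otimes \zeta_\beta$.

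The inner product over $H_j \times H_{k-j}$ then factors as a product of two inner products. The $H_{k-j}$-factor, $\langle \widetilde{\theta_i^{k-j}} \otimes \zeta_{\gamma'},\, \widetilde{\theta_i^{k-j}} \otimes \zeta_\gamma\rangle_{H_{k-j}}$, equals $\delta_{\gamma, \gamma'}$, since both sides are irreducible characters of $H_{k-j}$ and coincide if and only if $\gamma = \gamma'$. The $H_j$-factor is precisely the multiplicity computed by Lemma~\ref{lem:multiplicitythetaibeta}, hence equals $\delta_{\beta, \beta'}$. Substituting both into the sum coming from the branching rule, only the term with $\beta' = \beta$ and $\gamma' = \gamma$ survives, and the value is $c^{\alpha}_{\beta, \gamma}$, as required.

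The main obstacle is really the very first step: one needs to recognise that the double coset structure collapses to a single coset. Once the normal subgroup $\Z_p^k$ is spotted inside $G_j \times G_{k-j}$, everything afterwards is just an assembly of the restriction formulas (Lemmas~\ref{lem:easyrestrictions} and~\ref{lem:multiplicitythetaibeta}) with the standard $\sym_k$-branching rule, so I would take care to state and justify the double coset computation explicitly before invoking Mackey's theorem.
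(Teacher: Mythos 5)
Your proof is correct and follows essentially the same strategy as the paper's: a single application of Mackey's theorem reducing to $H_j \times H_{k-j}$, then the branching rule for $\zeta_\alpha$ together with Lemmas~\ref{lem:easyrestrictions} and~\ref{lem:multiplicitythetaibeta}. The only (cosmetic) difference is the justification that there is a single $(H_k, G_j\times G_{k-j})$-double coset: you note that $\Z_p^k\trianglelefteq G_k$ lies inside $G_j\times G_{k-j}$ so that $H_k\cdot(G_j\times G_{k-j})\supseteq H_k\Z_p^k=G_k$, whereas the paper observes that $\sym_k\le H_k$ already contains left-coset representatives; both arguments are valid, and yours even handles $j\in\{0,k\}$ uniformly rather than as separate edge cases.
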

\noindent
{\bf{Remark:}} in the above statement, and in the rest of the paper, we denote some tensor products by $\bt$ instead of $\otimes$ to emphasize the fact that they are outer tensor products.

\begin{proof}

We start by noticing that, if $j=0$ or $j=k$, then $G_j \times G_{k-j}=G_k$. For any $\alpha, \, \beta, \, \gamma \vdash k$, we let, in a natural way, $c^{\alpha}_{\emptyset,\gamma}=\delta_{\alpha,\ga}$ and $c^{\alpha}_{\beta,\emptyset}=\delta_{\alpha,\beta}$. Then, if $j=0$,  the claim becomes $$\left\langle \Ind^{G_k}_{H_k} \left( \widetilde{\theta_i^k} \otimes \zeta_{\alpha} \right) , \,  \widetilde{\psi_i^{k}} \otimes \varphi_{\ga}  \right\rangle_{G_k} = c^{\alpha}_{\emptyset, \ga}=\delta_{\alpha,\ga},$$
which is true for any $\alpha, \,  \gamma \vdash k$ by Theorem \ref{thm:Compatible} and Frobenius Reciprocity. If, on the other hand, $j=k$, then the claim becomes 
$$\left\langle \Ind^{G_k}_{H_k} \left( \widetilde{\theta_i^k} \otimes \zeta_{\alpha} \right) , \, \widetilde{\psi_r^j} \otimes \varphi_{\beta}\right\rangle_{G_k} = c^{\alpha}_{\beta, \emptyset}=\delta_{\alpha,\beta},$$
which is true for any $\alpha, \,  \beta \vdash k$ by Lemma \ref{lem:multiplicitythetaibeta} and Frobenius Reciprocity.

\noindent
From now on, we therefore fix any $0 < j < k$. Take any $\alpha \vdash k$, $\beta \vdash j$ and $\ga \vdash k-j$, and let
$$C_{i,j,\alpha,\beta,\ga}=\left\langle \Ind^{G_k}_{H_k} \left( \widetilde{\theta_i^k} \otimes \zeta_{\alpha} \right) , \, \Ind^{G_k}_{G_j \times G_{k-j}} \left( \left( \widetilde{\psi_r^j} \otimes \varphi_{\beta} \right) \bt \left( \widetilde{\psi_i^{k-j}} \otimes \varphi_{\ga} \right) \right) \right\rangle_{G_k}.$$
We will apply Theorem \ref{thm:mackey} to the groups ${\cal G}=G_k$, ${\cal H}=H_k$ and ${\cal K}=G_j \times G_{k-j}$. Since $H_k$ contains (a copy of) $\sym_k$, which itself contains representatives for the left cosets of $G_j \times G_{k-j}$ in $G_k$ (which are the same as representatives for the left cosets of $\sym_j \times \sym_{k-j}$ in $\sym_k$), there is a single $(H_k, G_j \times G_{k-j})$-double coset in $G_k$. Thus we have $G_k=H_k \cdot (G_j \times G_{k-j})$ and, with the notations of Theorem \ref{thm:mackey}, $m=1$ and $x_1=1$. Also, $H^1_k \cap (G_j \times G_{k-j})=H_k \cap (G_j \times G_{k-j})=H_j \times H_{k-j}$. Hence, by Theorem \ref{thm:mackey}, we have
$$C_{i,j,\alpha,\beta,\ga}=\langle \Res^{H_k}_{H_j \times H_{k-j}} ( \widetilde{\theta_i^k} \otimes \zeta_{\alpha} ) , \, \Res^{G_j \times G_{k-j}}_{H_j \times H_{k-j}} ( (\widetilde{\psi_r^j} \otimes \varphi_{\beta})  \bt ( \widetilde{\psi_i^{k-j}} \otimes \varphi_{\ga})  ) \rangle_{H_j \times H_{k-j}}.$$

Now, for any $(f; \, \pi) \in H_j \times H_{k-j}$, we can write $(f; \, \pi) =(f^{(j)}; \, \pi^{(j)}) \otimes (f^{(k-j)}; \, \pi^{(k-j)})$ in such a way that
$$\begin{array}{rcl} \widetilde{\theta_i^k}(f; \, \pi) & = & \dis \prod_{\nu=1}^{c(\pi)} \theta_i (g_{\nu}(f; \, \pi)) \\
 & = &  \dis \prod_{\nu=1}^{c(\pi^{(j)})} \theta_i (g_{\nu}(f^{(j)}; \, \pi^{(j)})) \cdot \prod_{\nu=1}^{c(\pi^{(k-j)})} \theta_i (g_{\nu}(f^{(k-j)}; \, \pi^{(k-j)})) \\
  & = &  \widetilde{\theta_i^j} (f^{(j)}; \, \pi^{(j)}) \cdot \widetilde{\theta_i^{k-j}} (f^{(k-j)}; \, \pi^{(k-j)})
\end{array}$$
(we only have to be careful, when seeing $\pi= \pi^{(j)} * \pi^{(k-j)}$ as an element of $\sym_j \times \sym_{k-j}$, that $\sym_j $ and
$\sym_{k-j}$ do act on the indices we want).

\noindent
This shows that
\begin{equation}
\label{eq:restheta}
 \Res^{H_k}_{H_j \times H_{k-j}} ( \widetilde{\theta_i^k})=\widetilde{\theta_i^j} \bt \widetilde{\theta_i^{k-j}} .
\end{equation}

Now, by the Littlewood-Richardson Rule (see \cite[Theorem 2.8.13]{James-Kerber}), we have
\begin{equation}
\label{eq:LR}
\Res^{\sym_k}_{\sym_j \times \sym_{k-j}} (\chi_{\alpha})= \dis \sum_{\mu \vdash j \atop \nu \vdash k-j} c^{\alpha}_{\mu, \nu} \cdot \chi_{\mu} \bt \chi_{\nu}.
\end{equation}
Since $\langle \Res^{H_k}_{H_j \times H_{j-k}}(\zeta_{\alpha}), \, \zeta_{\mu} \bt \zeta_{\nu} \rangle_{H_j \times H_{k-j}} = \langle \Res^{\sym_k}_{\sym_j \times \sym_{k-j}} (\chi_{\alpha}), \, \chi_{\mu} \bt \chi_{\nu} \rangle_{\sym_j \times \sym_{k-j}}$ and $\zeta_{\alpha}(1)=\chi_{\alpha}(1)$, this easily gives
\begin{equation}
\label{eq:reszeta}
\Res^{H_k}_{H_j \times H_{j-k}}(\zeta_{\alpha}) = \dis \sum_{\mu \vdash j \atop \nu \vdash k-j} c^{\alpha}_{\mu, \nu} \cdot \zeta_{\mu} \bt \zeta_{\nu}.
\end{equation}
Using (\ref{eq:restheta}) and (\ref{eq:reszeta}), we obtain
\begin{equation}
\label{eq:reslhs}
\Res^{H_k}_{H_j \times H_{k-j}} ( \widetilde{\theta_i^k} \otimes \zeta_{\alpha} ) =  \dis \sum_{\mu \vdash j \atop \nu \vdash k-j} c^{\alpha}_{\mu, \nu} \cdot (\widetilde{\theta_i^j} \otimes \zeta_{\mu}) \bt (\widetilde{\theta_i^{k-j}} \otimes \zeta_{\nu}).
\end{equation}
We also have
\begin{equation}
\label{eq:resrhs}
\Res^{G_j \times G_{k-j}}_{H_j \times H_{k-j}} ( (\widetilde{\psi_r^j} \otimes \varphi_{\beta})  \bt ( \widetilde{\psi_i^{k-j}} \otimes \varphi_{\ga}) ) =\Res^{G_j }_{H_j } ( \widetilde{\psi_r^j} \otimes \varphi_{\beta} )  \bt  \Res^{G_{k-j}}_{H_{k-j}} ( \widetilde{\psi_i^{k-j}} \otimes \varphi_{\ga} ).
\end{equation}
Together, (\ref{eq:reslhs}) and (\ref{eq:resrhs}) yield that $C_{i,j,\alpha,\beta,\ga}$ is equal to
$$\langle \dis \sum_{\mu \vdash j \atop \nu \vdash k-j} c^{\alpha}_{\mu, \nu} \cdot (\widetilde{\theta_i^j} \otimes \zeta_{\mu}) \bt (\widetilde{\theta_i^{k-j}} \otimes \zeta_{\nu}) , \, \Res^{G_j }_{H_j } ( \widetilde{\psi_r^j} \otimes \varphi_{\beta} )  \bt  \Res^{G_{k-j}}_{H_{k-j}} ( \widetilde{\psi_i^{k-j}} \otimes \varphi_{\ga} ) \rangle_{H_j \times H_{k-j}},$$
which in turn is the same as
\begin{equation}
\label{eq:C}
\dis \sum_{\mu \vdash j \atop \nu \vdash k-j} c^{\alpha}_{\mu, \nu} \cdot \langle \widetilde{\theta_i^j} \otimes \zeta_{\mu} , \Res^{G_j }_{H_j } ( \widetilde{\psi_r^j} \otimes \varphi_{\beta} )  \rangle_{H_j} \cdot \langle \widetilde{\theta_i^{k-j}} \otimes \zeta_{\nu} , \Res^{G_{k-j}}_{H_{k-j}} ( \widetilde{\psi_i^{k-j}} \otimes \varphi_{\ga} ) \rangle_{H_{k-j}}.
\end{equation}

\noindent
Now, by Lemma \ref{lem:multiplicitythetaibeta}, we have, for all $\mu \vdash j$,
\begin{equation}
\label{eq:C1}
 \langle \widetilde{\theta_i^j} \otimes \zeta_{\mu} , \Res^{G_j }_{H_j } ( \widetilde{\psi_r^j} \otimes \varphi_{\beta} )  \rangle_{H_j}=\delta_{\mu,\beta}.
 \end{equation}
Also, by Lemma \ref{lem:easyrestrictions}, we have
$$\Res^{G_{k-j}}_{H_{k-j}} ( \widetilde{\psi_i^{k-j}} \otimes \varphi_{\ga} ) = \Res^{G_{k-j}}_{H_{k-j}} ( \widetilde{\psi_i^{k-j}}) \otimes \Res^{G_{k-j}}_{H_{k-j}} ( \varphi_{\ga} ) = \widetilde{\theta_i^{k-j}} \otimes \zeta_{\ga}.$$
Hence, for all $\nu \vdash k-j$, we have
\begin{equation}
\label{eq:C2}
\langle \widetilde{\theta_i^{k-j}} \otimes \zeta_{\nu} , \Res^{G_{k-j}}_{H_{k-j}} ( \widetilde{\psi_i^{k-j}} \otimes \varphi_{\ga} ) \rangle_{H_{k-j}} = \langle \widetilde{\theta_i^{k-j}} \otimes \zeta_{\nu} , \widetilde{\theta_i^{k-j}} \otimes \zeta_{\ga}\rangle_{H_{k-j}}=\delta_{\nu,\ga}
\end{equation}
(since both are irreducible characters of $H_{k-j}$, the same if and only if $\nu = \ga$).

\noindent
Finally, using (\ref{eq:C}), (\ref{eq:C1}) and (\ref{eq:C2}), we obtain 
$$C_{i,j,\alpha,\beta,\ga} = \dis \sum_{\mu \vdash j \atop \nu \vdash k-j} c^{\alpha}_{\mu, \nu} \cdot \delta_{\mu,\beta} \cdot  \delta_{\nu,\ga} = c^{\alpha}_{\beta, \ga},$$
as claimed.
\end{proof}

\medskip
\noindent
We can now finally state and prove the main result of this section.

\begin{theorem}
\label{thm:inducedthetaialpha}
For any $i \in I$, any integer $k \geq 1$ and $\alpha \vdash k$, we have
$$\Ind^{G_k}_{H_k}\left(\widetilde{\theta_i^k} \otimes \zeta_{\alpha}\right)= \dis \sum_{j=0}^k \sum_{\beta \vdash j \atop \ga \vdash k-j} c^{\alpha}_{\beta, \ga} \cdot \Ind^{G_k}_{G_j \times G_{k-j}} \left( (\widetilde{\psi_r^j} \otimes \varphi_{\beta} ) \bt ( \widetilde{\psi_i^{k-j}} \otimes \varphi_{\ga} ) \right).$$

\end{theorem}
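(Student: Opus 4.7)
The plan is to recognise the right-hand side as already being in its irreducible decomposition in the character ring of $G_k$, read off the matching coefficients on the left-hand side from Theorem \ref{thm:multiplicities}, and then close the argument with a dimension count.

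For the identification, fix $0 \leq j \leq k$, $\beta \vdash j$ and $\gamma \vdash k-j$, and let $\sigma(\beta,\gamma,j) \Vdash k$ be the $p$-tuple of partitions with $\beta$ in position $r$, $\gamma$ in position $i$, and $\emptyset$ elsewhere. Unpacking the definition of $\chi^\sigma$ from Section \ref{sec:IrrCharWreath}, the subgroup $\prod_l G_{|\sigma^l|}$ collapses to $G_j \times G_{k-j}$ (the factors of size $0$ being trivial), and $\Psi^\sigma$ collapses to $(\widetilde{\psi_r^j} \otimes \varphi_\beta) \boxtimes (\widetilde{\psi_i^{k-j}} \otimes \varphi_\gamma)$, so that
$$\Ind^{G_k}_{G_j \times G_{k-j}}\bigl((\widetilde{\psi_r^j} \otimes \varphi_\beta) \boxtimes (\widetilde{\psi_i^{k-j}} \otimes \varphi_\gamma)\bigr) = \chi^{\sigma(\beta,\gamma,j)} \in \Irr(G_k).$$
Since $i \neq r$, distinct triples $(j,\beta,\gamma)$ produce distinct $p$-tuples $\sigma(\beta,\gamma,j)$, so these irreducibles are pairwise non-isomorphic and the right-hand side is already expressed in terms of distinct irreducible characters.

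Now Theorem \ref{thm:multiplicities} states that the multiplicity of each $\chi^{\sigma(\beta,\gamma,j)}$ in the left-hand side equals $c^\alpha_{\beta,\gamma}$, exactly the coefficient on the right-hand side. Writing the left-hand side as a non-negative integer combination of irreducibles of $G_k$, the difference of the two sides is therefore a non-negative combination of irreducibles $\chi^\tau$ with $\tau$ \emph{not} of the form $\sigma(\beta,\gamma,j)$, and it suffices to show this difference has dimension $0$. The left-hand side has dimension $[G_k : H_k]\,\chi_\alpha(1) = p^k\, \chi_\alpha(1)$. Using $\psi_r(1) = p-1$, $\psi_i(1) = 1$ for $i \in I$, and the standard dimension formula for an irreducible character of a wreath product, the dimension of $\chi^{\sigma(\beta,\gamma,j)}$ is $\binom{k}{j}(p-1)^j \chi_\beta(1)\chi_\gamma(1)$. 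Evaluating the Littlewood-Richardson expansion (\ref{eq:LR}) at the identity gives the dimensional identity $\sum_{\beta \vdash j,\, \gamma \vdash k-j} c^\alpha_{\beta,\gamma}\chi_\beta(1)\chi_\gamma(1) = \chi_\alpha(1)$ for each fixed $j$, and summing over $j$ via the binomial theorem $\sum_{j=0}^k \binom{k}{j}(p-1)^j = p^k$ shows that the right-hand side also has dimension $p^k\,\chi_\alpha(1)$. The difference thus vanishes, and the equality follows. The essential computation has been absorbed into Theorem \ref{thm:multiplicities}; the only remaining work is the bookkeeping that identifies the induced characters as the specific irreducibles $\chi^{\sigma(\beta,\gamma,j)}$ and the dimension count that matches the two sides.
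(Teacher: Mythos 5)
Your proof is correct and follows essentially the same route as the paper: invoke Theorem \ref{thm:multiplicities} to match multiplicities on the distinct irreducibles appearing on the right-hand side, then close with the degree count $p^k\chi_\alpha(1) = \sum_j \binom{k}{j}(p-1)^j\chi_\alpha(1)$. The only cosmetic difference is that you make explicit the identification of each induced character with the irreducible $\chi^{\sigma(\beta,\gamma,j)}$ labelled by the $p$-tuple with $\beta$ in position $r$ and $\gamma$ in position $i$, which the paper leaves implicit.
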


\begin{proof}
Since the characters appearing on the right-hand side are pairwise distinct irreducible characters of $G_k$, Theorem \ref{thm:multiplicities} shows that all we have to prove is that the left-hand side and right-hand side characters have the same degree.

\noindent
For the left-hand side, we easily see that 
$$\Ind^{G_k}_{H_k}\left(\widetilde{\theta_i^k} \otimes \zeta_{\alpha}\right)(1) = [ G_k \colon H_k] \cdot  (\widetilde{\theta_i^k} \otimes \zeta_{\alpha})(1)= [ G_k \colon H_k] \cdot \zeta_{\alpha}(1)=p^k \cdot  \chi_{\alpha}(1).$$

\noindent
Now, for any $0 \leq j \leq k$ and any $\beta \vdash j $ and $\ga \vdash k-j$, we have
$$\begin{array}{rcl} \Delta_{j,\beta,\ga} & := & \Ind^{G_k}_{G_j \times G_{k-j}} \left( (\widetilde{\psi_r^j} \otimes \varphi_{\beta} ) \bt ( \widetilde{\psi_i^{k-j}} \otimes \varphi_{\ga} ) \right)(1) \\ & = &  [G_k \colon G_j \times G_{k-j}] \cdot \widetilde{\psi_r^j} (1) \cdot \varphi_{\beta} (1) \cdot  \widetilde{\psi_i^{k-j}}(1) \cdot \varphi_{\ga}(1 )  \\ & = & \dis \frac{k!}{j! \cdot (k-j)!} \cdot  (\psi_r(1))^j \cdot \varphi_{\beta} (1)\cdot \varphi_{\ga}(1 ) \\ & = & \left( {k \atop j}\right) \cdot (p-1)^j \cdot \chi_{\beta}(1) \cdot \chi_{\ga}(1).\end{array}$$
We therefore obtain, for the right-hand side,
$$\begin{array}{rcl} \Delta_k & := &  \dis \sum_{j=0}^k \sum_{\beta \vdash j \atop \ga \vdash k-j} c^{\alpha}_{\beta, \ga} \cdot \Ind^{G_k}_{G_j \times G_{k-j}} \left( (\widetilde{\psi_r^j} \otimes \varphi_{\beta} ) \bt ( \widetilde{\psi_i^{k-j}} \otimes \varphi_{\ga} ) \right)(1)\\
 & = & \dis \sum_{j=0}^k \sum_{\beta \vdash j \atop \ga \vdash k-j} c^{\alpha}_{\beta, \ga} \cdot \Delta_{j,\beta,\ga}\\
 & = & \dis \sum_{j=0}^k \sum_{\beta \vdash j \atop \ga \vdash k-j} c^{\alpha}_{\beta, \ga} \cdot \left( {k \atop j}\right) \cdot (p-1)^j \cdot \chi_{\beta}(1) \cdot \chi_{\ga}(1)\\
 & = &  \dis \sum_{j=0}^k \left( {k \atop j}\right) \cdot (p-1)^j \cdot \left(\sum_{\beta \vdash j \atop \ga \vdash k-j} c^{\alpha}_{\beta, \ga} \cdot \chi_{\beta}(1) \cdot \chi_{\ga}(1)\right)\\
 & = & \left(\dis \sum_{j=0}^k \left( {k \atop j}\right) \cdot (p-1)^j \right)\cdot \chi_{\alpha}(1) \qquad \mbox{(by (\ref{eq:LR}))}\\
 & & \\
 & = & p^k \cdot \chi_{\alpha}(1)

\end{array}$$
(since $p^k=((p-1)+1)^k=\dis \sum_{j=0}^k \left( {k \atop j}\right) \cdot (p-1)^j$). This concludes the proof.

\end{proof}

\section{Main result}
\label{sec:Coef}
We can now state and prove our main result (Theorem \ref{thm:main}). Take any positive integer $w$. Recall that the coefficients we wish to find are the multiplicities of the irreducible characters of $H_w$ in the restriction to $H_w$ of any irreducible character of $G_w$. Equivalently, we want to decompose into irreducibles of $G_w$ the induced character $\Ind_{H_w}^{G_w}(\xi)$ of any $\xi \in  \Irr(H_w)$. Hence we take any $\alpha=(\alpha^1, \, \ldots , \, \alpha^{r-1}, \, \alpha^{r+1} , \, \ldots , \,  \alpha^p) \Vdash w$, and consider $\xi^{\alpha} \in \Irr(H_w)$. Recall that $\xi^{\alpha}$ is given by
$$\xi^{\alpha}= \Ind_{\prod_{i \in I} H_{|\alpha^i|}}^{H_w} (\dis \prod_{i \in I} \widetilde{\theta_i^{|\alpha^i|}}
\otimes \zeta_{\alpha^i}),$$or, letting $H_{\alpha}=\prod_{i \in I} H_{|\alpha^i|}$ and $\Theta^{\alpha}=\prod_{i \in I} \widetilde{\theta_i^{|\alpha^i|}}\otimes \zeta_{\alpha^i}$, by $\xi^{\alpha}=\Ind_{ H_{\alpha}}^{H_w} (\Theta^{\alpha})$.

\medskip
\noindent
The diagram below explains our strategy.

\medskip

\begin{center}
\begin{tikzpicture}

\draw (0,0) node{$G_{\alpha}=\prod_{i \in I} G_{|\alpha^i|}$};
\draw[very thick,->] (0,0.5) -- (0,1.5);
\draw (-2.5,2) node{$\Ind_{H_w}^{G_w}(\xi_{\alpha})$};
\draw (7.5,2) node{$\xi^{\alpha}=\Ind_{ H_{\alpha}}^{H_w} (\Theta^{\alpha})$};
\draw (0,2) node{$G_w$};
\draw[very thick,<-] (1.7,0) -- (3.5,0);
\draw (5,0) node{$ H_{\alpha}=\prod_{i \in I} H_{|\alpha^i|}$};
\draw[very thick,->] (5,0.5) -- (5,1.5);
\draw (7.5,0.1) node{$\Theta^{\alpha}$};
\draw (-2.5,0.1) node{$\widehat{\Theta^{\alpha}}$};
\draw (5,2) node{$H_w$};
\draw[very thick,<-] (1.5,2) -- (4,2);

\end{tikzpicture}

\end{center}
\medskip
\noindent
Starting from $\Theta^{\alpha} \in \Irr(H_{\alpha})$, instead of going right around the above diagram to compute $\Ind_{H_w}^{G_w}(\Ind_{ H_{\alpha}}^{H_w} (\Theta^{\alpha}))$, we will go left to compute (the same character) $\Ind_{G_{\alpha}}^{G_w}(\Ind_{ H_{\alpha}}^{G_{\alpha}} (\Theta^{\alpha}))$. Now, since $H_{|\alpha^i|} \leq G_{|\alpha^i|}$ for all $i \in I$, we have
$$\widehat{\Theta^{\alpha}}:=\Ind_{ H_{\alpha}}^{G_{\alpha}} (\Theta^{\alpha}) = \Ind_{\prod_{i \in I} H_{|\alpha^i|}}^{\prod_{i \in I} G_{|\alpha^i|}} \left( \dis \prod_{i \in I} \widetilde{\theta_i^{|\alpha^i|}}\otimes \zeta_{\alpha^i} \right)=  \dis \prod_{i \in I} \Ind_{H_{|\alpha^i|}}^{G_{|\alpha^i|}} \left(\widetilde{\theta_i^{|\alpha^i|}}\otimes \zeta_{\alpha^i} \right).$$
By Theorem \ref{thm:inducedthetaialpha}, we therefore obtain
$$ \begin{array}{rcl} \widehat{\Theta^{\alpha}}  & = & \dis \prod_{i \in I} \Ind_{H_{|\alpha^i|}}^{G_{|\alpha^i|}} \left(\widetilde{\theta_i^{|\alpha^i|}}\otimes \zeta_{\alpha^i} \right) \\
& = &
\dis \prod_{i \in I} \sum_{j=0}^{|\alpha^i|} \sum_{\beta \vdash j  \atop \ga \vdash |\alpha^i|-j} c^{\alpha^i}_{\beta, \ga} \cdot \Ind^{G_|\alpha^i|}_{G_j \times G_{|\alpha^i|-j}} \left( (\widetilde{\psi_r^j} \otimes \varphi_{\beta} ) \bt ( \widetilde{\psi_i^{|\alpha^i|-j}} \otimes \varphi_{\ga} ) \right)\\
& = & \dis \sum_{0 \leq j_i \leq |\alpha^i| \atop (i \in I)} \sum_{\beta^i \vdash j_i  \atop {\ga^i \vdash |\alpha^i|-j_i \atop (i \in I)}} \prod_{i \in I}c^{\alpha^i}_{\beta^i, \ga^i} \cdot \Ind^{G_|\alpha^i|}_{G_{j_i} \times G_{|\alpha^i|-j_i}} ( (\widetilde{\psi_r^{j_i}} \otimes \varphi_{\beta^i} ) \bt ( \widetilde{\psi_i^{|\alpha^i|-j_i}} \otimes \varphi_{\ga^i} ) )\\
& =: & \dis \sum_{0 \leq j_i \leq |\alpha^i| \atop (i \in I)} \sum_{\beta^i \vdash j_i  \atop {\ga^i \vdash |\alpha^i|-j_i \atop (i \in I)}} \dis \left(\prod_{i \in I}c^{\alpha^i}_{\beta^i, \ga^i} \right) \cdot \widehat{\Theta_{\alpha,\beta,\ga}} ,

\end{array}$$
where we have

$$ \begin{array}{rcl} \widehat{\Theta_{\alpha,\beta,\ga}} & = &  \dis \prod_{i \in I}\Ind^{G_|\alpha^i|}_{G_{j_i} \times G_{|\alpha^i|-j_i}} ( (\widetilde{\psi_r^{j_i}} \otimes \varphi_{\beta^i} ) \bt ( \widetilde{\psi_i^{|\alpha^i|-j_i}} \otimes \varphi_{\ga^i} ) ) \\
& = & \dis  \Ind^{\prod_{i\in I}G_|\alpha^i|}_{\prod_{i\in I}G_{j_i} \times G_{|\alpha^i|-j_i}} \left(\prod_{i \in I} (\widetilde{\psi_r^{j_i}} \otimes \varphi_{\beta^i} ) \bt ( \widetilde{\psi_i^{|\alpha^i|-j_i}} \otimes \varphi_{\ga^i} ) \right).\\
\end{array}$$
Inducing to $G_w$, we now have 
$$\Ind_{H_w}^{G_w}(\xi_{\alpha}) = \Ind_{G_{\alpha}}^{G_w}(\widehat{\Theta^{\alpha}}) = \dis \sum_{0 \leq j_i \leq |\alpha^i| \atop (i \in I)} \sum_{\beta^i \vdash j_i  \atop {\ga^i \vdash |\alpha^i|-j_i \atop (i \in I)}} \dis \left(\prod_{i \in I}c^{\alpha^i}_{\beta^i, \ga^i} \right) \Ind_{G_{\alpha}}^{G_w}(\widehat{\Theta_{\alpha,\beta,\ga}}) ,$$
where, if we write $J=(j_1, \, \ldots , j_{r-1}, \, j_{r+1}, \, \ldots, \, j_p)$, $|J|=\sum_{i \in I} j_i$ and $\alpha-J=(\alpha^1-j_1, \, \ldots  , \, \alpha^{r-1}- j_{r-1}, \, \alpha^{r+1}- j_{r+1}, \, \ldots, \, \alpha^p-j_p)$, we have 
 $$\begin{array}{l} \Ind_{G_{\alpha}}^{G_w}(\widehat{\Theta_{\alpha,\beta,\ga}})  =  \dis  \Ind^{G_w}_{\prod_{i\in I}G_{j_i} \times G_{|\alpha^i|-j_i}} \left(\prod_{i \in I} (\widetilde{\psi_r^{j_i}} \otimes \varphi_{\beta^i} ) \bt ( \widetilde{\psi_i^{|\alpha^i|-j_i}} \otimes \varphi_{\ga^i} ) \right) \\
  =  \dis  \Ind^{G_w}_{\prod_{i\in I}G_{j_i} \times \prod_{i\in I}G_{|\alpha^i|-j_i}} \left(\prod_{i \in I} (\widetilde{\psi_r^{j_i}} \otimes \varphi_{\beta^i} ) \bt \prod_{i\in I}( \widetilde{\psi_i^{|\alpha^i|-j_i}} \otimes \varphi_{\ga^i} ) \right) \\
  =  \dis  \Ind^{G_w}_{G_J\times G_{\alpha-J}} \left(\prod_{i \in I} (\widetilde{\psi_r^{j_i}} \otimes \varphi_{\beta^i} ) \bt \prod_{i\in I}( \widetilde{\psi_i^{|\alpha^i|-j_i}} \otimes \varphi_{\ga^i} ) \right) \\
 
= \dis \Ind^{G_w}_{G_{|J|} \times G_{\alpha-J}}  \left( \Ind^{G_{|J|} \times G_{\alpha-J}}_{G_{J} \times G_{\alpha-J}} \left(\prod_{i \in I} (\widetilde{\psi_r^{j_i}} \otimes \varphi_{\beta^i} ) \bt \prod_{i\in I}( \widetilde{\psi_i^{|\alpha^i|-j_i}} \otimes \varphi_{\ga^i} ) \right) \right) \\

= \dis \Ind^{G_w}_{G_{|J|} \times G_{\alpha-J}} \left( \Ind^{G_{|J|}}_{G_{J}} \left(\prod_{i \in I} (\widetilde{\psi_r^{j_i}} \otimes \varphi_{\beta^i} ) \right) \bt \prod_{i\in I}( \widetilde{\psi_i^{|\alpha^i|-j_i}} \otimes \varphi_{\ga^i} ) \right) .

 \end{array}$$
 And, iterating \cite[Proposition 4.1]{Stein}, we have
 $$ \Ind^{G_{|J|}}_{G_{J}} \left(\prod_{i \in I} (\widetilde{\psi_r^{j_i}} \otimes \varphi_{\beta^i} ) \right)= \dis \sum_{\ga^r \vdash |J|} c_{(\beta^i, \, i \in I)}^{\ga^r} \cdot (\widetilde{\psi_r^{|J|}} \otimes \varphi_{\ga^r} ),$$
 where $c_{(\beta^i, \, i \in I)}^{\ga^r}=\langle \chi_{\ga^r}, \, \Ind_{\prod_{i \in I} \sym_{j_i}}^{\sym_{|J|}} (\prod_{i \in I}\chi_{\beta^i}) \rangle_{\sym_{|J|}}$ is the coefficient obtained by iterating the Littlewood-Richardson Rule. 
 
 \noindent
 We therefore obtain
 $$\begin{array}{l} \Ind_{G_{\alpha}}^{G_w}(\widehat{\Theta_{\alpha,\beta,\ga}})   =  \dis \sum_{\ga^r \vdash |J|} c_{(\beta^i, \, i \in I)}^{\ga^r} \cdot \Ind^{G_w}_{G_{|J|} \times G_{\alpha-J}} \left( (\widetilde{\psi_r^{|J|}} \otimes \varphi_{\ga^r} )  \bt \prod_{i\in I}( \widetilde{\psi_i^{|\alpha^i|-j_i}} \otimes \varphi_{\ga^i} ) \right)\\
 =  \dis \sum_{\ga^r \vdash |J|} c_{(\beta^i, \, i \in I)}^{\ga^r} \cdot \Ind^{G_w}_{G_{|\ga^r|} \times \prod_{i \in I}G_{|\ga^i|}} \left( (\widetilde{\psi_r^{|\ga^r|}} \otimes \varphi_{\ga^r} )  \bt \prod_{i\in I}( \widetilde{\psi_i^{|\ga^i|}} \otimes \varphi_{\ga^i} ) \right),\\
 
 \end{array}$$
 whence 
 $$\Ind_{G_{\alpha}}^{G_w}(\widehat{\Theta_{\alpha,\beta,\ga}}) =\dis \sum_{\ga^r \vdash |J|} c_{(\beta^i, \, i \in I)}^{\ga^r} \cdot \chi^{(\ga^1, \, \ldots , \, \ga^p)}.$$

\noindent
Finally, this yields
$$\Ind_{H_w}^{G_w}(\xi^{\alpha})=\dis \sum_{0 \leq j_i \leq |\alpha^i| \atop (i \in I)} \sum_{\beta^i \vdash j_i  \atop {\ga^i \vdash |\alpha^i|-j_i \atop (i \in I)}} \sum_{\ga^r \vdash \sum_{i \in I} j_i}  \left(\prod_{i \in I}c^{\alpha^i}_{\beta^i, \ga^i} \right) \cdot c_{(\beta^i, \, i \in I)}^{\ga^r} \cdot \chi^{(\ga^1, \, \ldots , \, \ga^p)},$$
 which is the decomposition of $\Ind_{H_w}^{G_w}(\xi^{\alpha})$ into irreducible characters of $G_w$. Note that each irreducible $\chi^{(\ga^1, \, \ldots , \, \ga^p)}$ appears (with multiplicity) several times, corresponding to the choice of the $\beta^i$'s ($i \in I$). This can now be rewritten as

\begin{theorem}
\label{thm:main}
For any integer $w>0$ and $\alpha=(\alpha^1, \, \ldots , \, \alpha^{r-1}, \, \alpha^{r+1} , \, \ldots , \,  \alpha^p) \Vdash w$, we have
$$\Ind_{H_w}^{G_w}(\xi^{\alpha})=\dis \sum_{\ga=(\ga^1, \, \ldots , \, \ga^p) \Vdash w} k_{\alpha,\ga} \cdot \chi^{\ga},$$
where, for any $\ga=(\ga^1, \, \ldots , \, \ga^p) \Vdash w$,
$$k_{\alpha,\ga} = \dis \sum_{\beta^i \vdash |\alpha^i|-|\ga^i| \atop (i \in I)} \left(\prod_{i \in I}c^{\alpha^i}_{\beta^i, \ga^i} \right) \cdot c_{(\beta^i, \, i \in I)}^{\ga^r}.$$
In particular, $k_{\alpha,\ga} =0$ unless $|\ga^i| \leq |\alpha^i|$ for all $i \in I$.

\end{theorem}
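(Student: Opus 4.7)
The plan is to use transitivity of induction to replace the direct computation of $\Ind_{H_w}^{G_w}(\xi^{\alpha})=\Ind_{H_w}^{G_w}\bigl(\Ind_{H_{\alpha}}^{H_w}(\Theta^{\alpha})\bigr)$ by the equivalent expression $\Ind_{G_{\alpha}}^{G_w}\bigl(\Ind_{H_{\alpha}}^{G_{\alpha}}(\Theta^{\alpha})\bigr)$, where $G_{\alpha}:=\prod_{i\in I}G_{|\alpha^i|}$ and $H_\alpha:=\prod_{i\in I}H_{|\alpha^i|}$. The advantage is that the inner induction splits as an outer product over $i\in I$, each factor of the form $\Ind_{H_{|\alpha^i|}}^{G_{|\alpha^i|}}(\widetilde{\theta_i^{|\alpha^i|}}\otimes\zeta_{\alpha^i})$, to which Theorem \ref{thm:inducedthetaialpha} applies immediately. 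Doing so produces a multi-indexed sum parametrized by integers $0\leq j_i\leq|\alpha^i|$ and partitions $\beta^i\vdash j_i$, $\gamma^i\vdash|\alpha^i|-j_i$, with weights $\prod_{i\in I}c^{\alpha^i}_{\beta^i,\gamma^i}$, whose typical summand is an induced character built from the blocks $\widetilde{\psi_r^{j_i}}\otimes\varphi_{\beta^i}$ and $\widetilde{\psi_i^{|\alpha^i|-j_i}}\otimes\varphi_{\gamma^i}$.

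Next I would carry out the outer induction to $G_w$. The crucial manoeuvre is to regroup the tensor factors: the $\widetilde{\psi_r^{j_i}}\otimes\varphi_{\beta^i}$ pieces (one per $i\in I$) naturally live in $G_J:=\prod_{i\in I}G_{j_i}\leq G_{|J|}$, where $|J|=\sum_{i\in I}j_i$, while the $\widetilde{\psi_i^{|\alpha^i|-j_i}}\otimes\varphi_{\gamma^i}$ pieces will eventually become the $\psi_i$-blocks of a character of $G_w$. Using transitivity through $G_{|J|}\times G_{\alpha-J}$ and invoking an iterated version of \cite[Proposition 4.1]{Stein} for wreath products, I would obtain
$$\Ind^{G_{|J|}}_{G_{J}}\Bigl(\dis\prod_{i\in I}(\widetilde{\psi_r^{j_i}}\otimes\varphi_{\beta^i})\Bigr)=\dis\sum_{\gamma^r\vdash|J|}c^{\gamma^r}_{(\beta^i,\,i\in I)}\cdot(\widetilde{\psi_r^{|J|}}\otimes\varphi_{\gamma^r}),$$
where $c^{\gamma^r}_{(\beta^i,\,i\in I)}$ is the iterated Littlewood-Richardson coefficient governing $\Ind_{\prod_{i\in I}\sym_{j_i}}^{\sym_{|J|}}(\prod_{i\in I}\chi_{\beta^i})$. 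This identity is plausible because all the $\widetilde{\psi_r^{j_i}}$ carry the same ``inner'' character $\psi_r$, so the reduction really only involves the outer symmetric-group factor.

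Once this regrouping is completed, each summand reduces to an induction from $G_{|\gamma^r|}\times\prod_{i\in I}G_{|\gamma^i|}$ to $G_w$ of $(\widetilde{\psi_r^{|\gamma^r|}}\otimes\varphi_{\gamma^r})\bt\prod_{i\in I}(\widetilde{\psi_i^{|\gamma^i|}}\otimes\varphi_{\gamma^i})$, which is by definition $\chi^{(\gamma^1,\ldots,\gamma^p)}$. Collecting identical terms and noting that the relation $|\alpha^i|=|\beta^i|+|\gamma^i|$ is forced by the Littlewood-Richardson vanishing condition, I would read off the stated formula for $k_{\alpha,\gamma}$, together with the vanishing $k_{\alpha,\gamma}=0$ unless $|\gamma^i|\leq|\alpha^i|$ for every $i\in I$. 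The main obstacle is not any individual computation but rather the bookkeeping of nested sums and the justification of the iterated Littlewood-Richardson identity for the $\widetilde{\psi_r^k}$-blocks; the latter is the only genuinely new ingredient beyond the results already established, and should follow cleanly by induction on $|I|$ from the two-factor case in \cite{Stein} combined with the associativity of induction.
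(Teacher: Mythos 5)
Your proposal follows the paper's proof essentially step for step: the same reduction through $G_{\alpha}=\prod_{i\in I}G_{|\alpha^i|}$ by transitivity of induction, the same application of Theorem \ref{thm:inducedthetaialpha} factorwise, the same regrouping of the $\widetilde{\psi_r^{j_i}}$-blocks through $G_{|J|}\times G_{\alpha-J}$, and the same appeal to an iterated form of \cite[Proposition 4.1]{Stein} to produce the inner Littlewood--Richardson coefficient $c^{\gamma^r}_{(\beta^i,\,i\in I)}$. The argument is correct and identical in structure to the one in the paper.
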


\begin{remark}
We recover from Theorem \ref{thm:main} the fact that our basic set $\{ \chi^{\ga} \in \Irr(G_w) \, | \, \ga^r=\emptyset \}$ corresponds to $\Irr(H_w)$ (see Theorem \ref{thm:Compatible} and Remark \ref{rem:Compatible}). Indeed, for $\xi^{\alpha}$ to appear in $\Res^{G_w}_{H_w}(\chi^{\ga})$, we must have $|\alpha^i| \geq |\ga^i| $ for all $i \in I$. But, if $\ga^r=\emptyset$, then we already have $\dis \sum_{i \in I} | \ga_i|=\sum_{i=1}^p | \ga_i|=w=\sum_{i \in I} | \alpha_i|$, so that we can only have $|\alpha^i| = |\ga^i| $ for all $i \in I$. We then get $k_{\alpha,\ga}= \dis \left( \prod_{i \in I}c^{\alpha^i}_{\emptyset, \ga^i} \right) \cdot c_{(\emptyset, \, \ldots , \, \emptyset)}^{\emptyset}=\prod_{i \in I} \delta_{\alpha^i,\ga^i}$, so that $k_{\alpha,\ga}=\delta_{\hat{\alpha}, \ga}$ (with the notation of Theorem \ref{thm:Compatible}).

\end{remark}

 \bibliographystyle{abbrv}
\bibliography{references}

\end{document}